\documentclass[12pt, a4paper]{article}

\usepackage[utf8]{inputenc}
\usepackage[T1]{fontenc}
\usepackage[english]{babel}
\usepackage{indentfirst}
\usepackage[top=2.5cm,bottom=2.5cm,right=2cm,left=2cm]{geometry}
\usepackage{amsmath}
\usepackage{amsfonts}
\usepackage{amssymb}
\usepackage{amsthm}
\usepackage{graphicx}
\usepackage{yhmath}
\usepackage{babelbib}
\usepackage{lmodern} \normalfont
\usepackage{color}
\usepackage{dsfont}
\usepackage[displaymath, mathlines, pagewise]{lineno}
\usepackage{fancyhdr}
\usepackage{mathtools}
\usepackage{scalerel,stackengine}
\stackMath
\newcommand\reallywidehat[1]{%
\savestack{\tmpbox}{\stretchto{%
  \scaleto{%
    \scalerel*[\widthof{\ensuremath{#1}}]{\kern-.6pt\bigwedge\kern-.6pt}%
    {\rule[-\textheight/2]{1ex}{\textheight}}
  }{\textheight}%
}{0.5ex}}%
\stackon[1pt]{#1}{\tmpbox}%
}

\newtheorem{thm}{Theorem}[section]
\newtheorem{prop}[thm]{Proposition}
\newtheorem{lem}[thm]{Lemma}
\newtheorem{cor}[thm]{Corollary}
\newtheorem{assump}[thm]{Assumption}

\newcommand{\R}{\mathbb{R}}
\newcommand{\C}{\mathbb{C}}

\newcommand{\eps}{\varepsilon}
\title{Convergence and Stability Analysis of the Extended Infinite Horizon Model Predictive Control}

\date{\today}

\author{L.~A.~Alvarez\thanks{School of Chemical Engineering, University of Campinas--UNICAMP, av.~Albert Einstein 500, 13083--852, Campinas SP, Brazil; e-mail: luzadri@unicamp.br} \and D.~F. de Bernardini\footnotemark[2]  \and C.~Gallesco\thanks{Department of Statistics, Institute of Mathematics, Statistics and Scientific Computation, University of Campinas--UNICAMP, rua Sérgio Buarque de Holanda 651, 13083--859, Campinas SP, Brazil; e-mails: $\{$diegofb, gallesco$\}$@unicamp.br}}

\begin{document}
	
\maketitle

\begin{abstract}
	\footnotesize

Model Predictive Control (MPC) is a popular technology to operate industrial systems. It refers to a class of control algorithms that use an explicit model of the system to obtain the control action by minimizing a cost function. At each time step, MPC solves an optimization problem that minimizes the future deviation of the outputs which are calculated from the model. The solution of the optimization problem is a sequence of control inputs, the first input is applied to the system, and the optimization process is repeated at subsequent time steps. In the context of MPC, convergence and stability are fundamental issues. A common approach to obtain MPC stability is by setting the prediction horizon as infinite. For stable open-loop systems, the infinite horizon can be reduced to a finite horizon MPC with a terminal weight computed through the solution of a Lyapunov equation. This paper presents a rigorous analysis of convergence and stability of the extended nominally stable MPC developed by Odloak [Odloak, D. Extended robust model predictive control, AIChE J. 50 (8) (2004) 1824–1836] and the stable MPC with zone control [González, A.H., Odloak, D. A stable MPC with zone control, J. Proc. Cont. 19 (2009) 110-122]. The mathematical proofs consider that the system is represented by a general gain matrix $D_0$, i.e., not necessarily regular, and they are developed for any input horizon $m$. The proofs are based on elementary geometric and algebraic tools and we believe that they can be adapted to the derived MPC approaches, as well as future studies.

\vspace{0.3cm}
\noindent\textit{\textbf{Keywords}}: MPC, nominal stability, cost function, optimization, zone control.

\noindent\textit{\textbf{Mathematics Subject Classification (2020)}}: 93C95, 93D20.
\end{abstract}

\section{Introduction}

Model Predictive Control (MPC) originated in the late seventies and has been developed considerably since then (Camacho and Bordons, 2007). It was originally developed to meet the specialized control needs of petroleum refineries and power plants, but today MPC represents a powerful technology to operate complex dynamic systems, with several industrial applications, including process control, automotive systems, robotics, and energy management. MPC refers to a class of control algorithms that use an explicit model of the system to obtain the control action by minimizing a cost function. The model represents a dynamic relation between system inputs (control actions) and outputs (measurements). The purpose of the model is to predict the future response of the outputs over a prediction horizon. At each time step, an MPC algorithm solves an optimization problem that contains a performance cost function, the predictive model of the system and constraints on inputs and outputs. The solution of this problem is a sequence of inputs, the first input in the optimal sequence is then applied to the system, and the optimization process is repeated at the next time step, incorporating updated output measurements.

Fundamental aspects of any control system are convergence and stability. Convergence refers to the ability of the optimization process to reach a solution that satisfies the control objectives within a finite number of iterations. Guarantee of stability is essential to prevent undesirable behaviors such as oscillations, instability, or divergence, which can compromise the performance and safety of the closed-loop system. Various approaches have been proposed to analyze and ensure the stability of MPC. Since MPC uses a prediction model, the stability of the closed-loop system with MPC is classified in two types: if the prediction model perfectly represents the process system, the stability is nominal, and if there is uncertainty in the prediction model, the stability is robust. The nominal stability of MPC can be approached by different methods. In the classical method, stability is forced through indirect methods such as those based on the existence of a Lyapunov function that represents the closed-loop behavior.
 
According to Keerthi and Gilbert (1988), the MPC stability can be obtained when the terminal state is constrained to the origin. If the constrained optimization problem remains feasible along the time steps, the cost function decreases and can be interpreted as a Lyapunov function, then it can be proven that the closed-loop system is stable. The main disadvantage of this method is that the terminal constraint can turn the optimization problem infeasible. 

Michalska and Mayne (1993) proposed the Dual MPC, which is an extension of the terminal state technique to produce a stable MPC. This approach considers a terminal set constraint instead of a terminal state constraint. At the end of the prediction horizon the terminal state is forced to lie in an invariant control set that contains the desired equilibrium state. It means that once the state enters the terminal set, a linear controller will maintain the state inside the terminal set. So, the Dual MPC uses two control laws, one outside the terminal set and one inside the terminal set. 

An usual method to obtain stability of an MPC closed-loop system is to adopt an infinite prediction horizon. In this sense, Rawlings and Muske (1993) developed an MPC regulator with infinite prediction horizon and input and output constraints. For stable open-loop systems, the infinite horizon can be reduced to a finite horizon MPC with a terminal weight computed through the solution of a Lyapunov equation. This MPC has recursive feasibility, that is, if the optimization problem is feasible at time $k$, it will remain feasible at any subsequent time step $k+1$, $k+2$, $\dots$ It was also shown that under these conditions the MPC cost function is strictly decreasing and behaves as a Lyapunov function of the closed-loop system. 

The infinite horizon MPC was later extended to the reference tracking problem by Rodrigues and Odloak (2003). This work overcame the need to know the system steady state allowing the application to the output-tracking problem and the regulator problem with unknown disturbances, which was one of the major barriers to implement infinite horizon MPC in practice. Furthermore, Odloak (2004) presented an easier and more practical infinite horizon MPC. Following the method of Rodrigues and Odloak (2003), slack variables were added to the optimization problem, allowing a minimal violation of constraints, and keeping the cost function limited for the disturbed system. This feature is important for practical implementation.

In this paper, we consider the MPC proposed by Odloak (2004), which is an infinite horizon MPC that considers an Output Prediction-Oriented Model (OPOM). The OPOM model is a state-space model arranged in the incremental form of inputs, and it is developed from the analytical form of the step response of the system (Rodrigues and Odloak, 2003). For this type of model, the output steady state prediction is one of the process states, which is suitable to impose the end constraint in the infinite horizon MPC. The MPC proposed by Odloak (2004) considers open-loop stable systems. Over the last 20 years, this controller has gained attention in the academic community and chemical process industry. The infinite horizon MPC with OPOM model was further developed for zone control, where the outputs are controlled inside zones or ranges instead of fixed set-points (Gonzalez and Odloak, 2009). The consideration of the output zones results in additional degrees of freedom left to the MPC optimization problem, which means that all or some inputs are free to be moved to optimal targets that can be defined externally or calculated through a real time optimization (RTO) layer. The MPC with zone control served as a basis for subsequent approaches: it was extended to systems with integrating poles (Carrapiço and Odloak, 2005; Gonzalez et al, 2007; Costa et al, 2021), to integrating systems with optimizing targets (Alvarez et al., 2009), to dead time systems (Gonzalez and Odloak, 2011; Santoro and Odloak, 2012; Martins et al, 2013; Pataro et al, 2019, 2022) and to unstable systems (Martins and Odloak, 2016). The infinite horizon MPC with OPOM model and zone control was also formulated in two layers to receive the real time optimization targets, which are the solution of an economic optimization problem (Alvarez and Odloak, 2010, 2014; Oliveira et al, 2019). Furthermore, the infinite horizon MPC with OPOM model has been implemented in the context of process design methodologies that consider simultaneously economic profit, dynamic performance, and process safety (Carvalho and Alvarez, 2020; Marques and Alvarez, 2023).

Additionally, control technologies based on the infinite horizon MPC with OPOM model have been successfully applied in real process industries, especially oil refineries (Carrapiço et al, 2009; Porfirio and Odloak, 2011; Strutzel et al, 2013; Strutzel, 2014; Martin et al, 2019) as well as pilot scale plants (Martin et al, 2013; Silva et al., 2020). Nowadays, the MPC with OPOM model is part of an in-house advanced control package developed by Petrobras (Petróleo Brasileiro S.A.) and has been implemented in many process units of the main oil refineries of Brazil (Sencio, 2022).

Considering the relevance of the work developed after Odloak (2004), the objective of this paper is to provide a rigorous proof of convergence and stability for the MPC with OPOM model with a general gain matrix $D_0$, i.e.~not necessarily regular, allowing the input and output vectors to have different dimensions. The original paper provides insights to show recursive feasibility, convergence and stability for the case where the gain matrix $D_0$ is regular and the input horizon $m$ is equal to 1, this last simplification is also present in the subsequent works. Here, we use elementary geometric and algebraic tools to develop mathematical proofs that work for any input horizon $m$ and any gain matrix $D_0$ in the infinite horizon MPC, and we provide explicit expressions that can be implemented in practice.  Furthermore, we developed rigorous proofs of convergence and stability for the MPC with zone control (Gonzalez and Odloak, 2009) considering a general input horizon $m$. We believe that the proofs presented in this work can be adapted to the derived approaches and serve as a mathematical background for future studies and developments in the MPC field.

The paper is organized as follows. In Section 2 we present the OPOM model and the infinite horizon MPC formulation with fixed set-point, the corresponding convergence and stability theorems and their proofs. In Section 3 we present the infinite horizon MPC with zone control, the convergence and stability results and their proofs.

\section{Extended Infinite Horizon MPC}\label{Sec2}
\subsection{Formulation and results}\label{Model}

The extended version of the infinite horizon MPC considers the following OPOM model, which is a discrete time state-space model obtained from the step response (Odloak, 2004; Gonzalez and Odloak, 2009).
For discrete time $k\geq 0$, let
\begin{align}
	\begin{bmatrix}
		x_s(k+1) \\ x_d(k+1)
	\end{bmatrix}
	=
	\begin{bmatrix}
		I  & 0 \\ 0 & F
	\end{bmatrix}
	\begin{bmatrix}
		x_s(k) \\ x_d(k)
	\end{bmatrix}
	+ 
	\begin{bmatrix}
		D_0 \\ D_d
	\end{bmatrix}
	\big(u(k+1) - u(k)\big)
	\label{model1}
\end{align}
and
\begin{align}
y(k) = 
\begin{bmatrix}
I & \Psi 
\end{bmatrix}
\begin{bmatrix}
x_s(k) \\ x_d(k)
\end{bmatrix},
\label{model2}
\end{align}
where $x_s\in \R^{n_y}$, $x_d \in \C^{n_d}$, $u\in \R^{n_u}$, $y\in\R^{n_y}$, $F \in \C^{n_d\times n_d}$, $D_0 \in \R^{n_y\times n_u}$, $D_d \in \C^{n_d\times n_u}$, $\Psi \in \R^{n_y\times n_d}$ and $I$ is the identity matrix of dimension $n_y$. It is worth noting that the matrix $D_d$ considered here corresponds to the matrix $D^dFN$ in Odloak (2004). In the state equation (\ref{model1}), $x_s$ is called the \textit{static part} of the state of the system, while $x_d$ is called the \textit{dynamic part}. The vector $u$ represents the \textit{inputs} of the model and $y$ stands for the \textit{outputs}. Matrix $D_0$ is called the \textit{static gain} of the system.

In the infinite horizon MPC setting, we denote by $m\in\mathbb{N}$ the input horizon. Since, at each time step $k\geq 1$, an optimization problem is solved for $u$ over the horizon time interval of size $m$, we introduce the following notation: for $j=0,1, \dots, m-1$, we define $\Delta u(j|k)$ as the $j$-th move of the input solution of the optimization problem (to be defined below) at time step $k$. For the other variables of the model, we will use a similar notation. It is important to point out that, at each time step $k$, only the first move of the input solution of the optimization problem, $\Delta u(0|k)$, is implemented in the system. As a consequence, at each time step $k\geq 1$, $u(k) = \sum_{\ell=1}^{k} \Delta u(0|\ell)$.

The extended infinite horizon MPC is based on the following cost function
\begin{align*}
V_k := \sum_{j=0}^{\infty} \big[e(j|k)-\delta_k\big]^{T}Q\big[e(j|k)-\delta_k\big] + \sum_{j=0}^{m-1} \Delta u(j|k)^{T}R\Delta u(j|k) + \delta_k^{T}S\delta_k, ~~k\geq 1,
\end{align*}
where $e(j|k) := y(j|k) - r$, $r\in\R^{n_y}$ is the output set-point, $\delta_k\in\R^{n_y}$ is a \textit{slack} vector, $Q\in \R^{n_y\times n_y}$, $R\in \R^{n_u\times n_u}$ and $S\in \R^{n_y\times n_y}$ are positive definite. To prevent the above cost from being unbounded, as discussed in Odloak (2004), the following constraint is imposed
\begin{align}
	x_s(m-1|k) - \delta_k - r = 0, ~~k\geq 1.
	\label{rest1}
\end{align}

We will work under the following assumption which was already present in Odloak (2004).
\begin{assump}
	The system is stable, that is, the spectral radius of $F$ is strictly smaller than~$1$.
	\label{Assump1}
\end{assump}

Using Assumption \ref{Assump1} and (\ref{rest1}), we obtain, for $k\geq 1$,
\begin{align}
\label{Cost}
	V_k &= 
	\begin{multlined}[t]
	\sum_{j=0}^{m-1} \big[e(j|k)-\delta_k\big]^{T}Q\big[e(j|k)-\delta_k\big] + x_d(m-1|k)^{T}\bar{Q}x_d(m-1|k) \nonumber\\ 
	+ \sum_{j=0}^{m-1} \Delta u(j|k)^{T}R\Delta u(j|k) + \delta_k^{T}S\delta_k
	\end{multlined} \nonumber\\
	&= \sum_{j=0}^{m-1} \big\|e(j|k)-\delta_k\big\|_Q^2 + \big\|x_d(m-1|k)\big\|_{\bar{Q}}^2 + \sum_{j=0}^{m-1} \big\|\Delta u(j|k)\big\|_R^2 + \big\|\delta_k\big\|_S^2,
\end{align}
where
\begin{align*}
\bar{Q} := \sum_{j=1}^{\infty} (F^j)^T\Psi^T Q \Psi F^j
\end{align*}
is positive semidefinite. Also, note that $\bar{Q} - F^{T}\bar{Q}F = F^{T}\Psi^{T}Q\Psi F$.

The control optimization problem of the extended infinite horizon MPC can be stated as follows: for all $k\geq 1$,
\begin{align*}
	\min_{\Delta u_k, \delta_k} V_k
\end{align*}
subject to (\ref{rest1}) and 
\begin{align*}
u(k-1) + \sum_{j=0}^i\Delta u(j|k)\in \mathbb{U},\;\text{for}\; 0\leq i< m\:\;\; ~\text{and}~\;\;\; \Delta u(j|k) \in \Delta \mathbb{U}, \text{ for } 0\leq j<m,
\end{align*}
where $\mathbb{U}$ and $\Delta\mathbb{U}$ are fixed rectangles in $\R^{n_u}$ containing the origin and 
\begin{align*}
\Delta u_k = \big[\Delta u(0|k)^T ~\Delta u(1|k)^T ~\cdots ~\Delta u(m-1|k)^T\big]^T \in \R^{mn_u}.
\end{align*}
We assume that, at time $0$, the system is in the steady state $u(0)={\bf 0}$, $x_s(0)={\bf 0}$, $x_d(0)={\bf 0}$. Since $\mathbb{U}$ and $\Delta\mathbb{U}$ are convex sets and $R$ is positive definite, the solution of the above optimization problem is unique. Let $\Delta u_k^*$, $\delta_k^*$ denote the solution at time step $k$, and let $V_k^*$ be the corresponding cost,
\begin{align*}
V_k^* = \sum_{j=0}^{m-1} \big\|e^*(j|k)-\delta_k^*\big\|_Q^2 + \big\|x_d^*(m-1|k)\big\|_{\bar{Q}}^2 + \sum_{j=0}^{m-1} \big\|\Delta u^*(j|k)\big\|_R^2 + \big\|\delta_k^*\big\|_S^2,
\end{align*}
where $e^*$, $x_s^*$ and $x_d^*$ are obtained from $\Delta u_k^*$, (\ref{model1}) and (\ref{model2}).

We also need the following

\begin{assump}
	The output reference $r$ is such that $r=D_0 u_r$ for some $u_r\in \mathbb{U}$. 
	\label{Assump3}
\end{assump}

We finally state the results about convergence and stability for the extended infinite horizon MPC. We emphasize that we do not suppose that the gain matrix $D_0$ is regular in the following theorems.

\begin{thm}[Convergence]
\label{Maintheo}
	Under Assumptions \ref{Assump1} and \ref{Assump3}, we can choose the matrix $S$ such that 
$$\displaystyle\lim_{k\to\infty}V_k^*=0.$$
\end{thm}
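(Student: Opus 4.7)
The plan is the standard MPC Lyapunov argument: exhibit a feasible shifted candidate at each time step to prove monotonicity of $V_k^*$, and then exploit the geometric content of Assumption~\ref{Assump3} together with a sufficiently large $S$ to force the limit to vanish.

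First I would establish monotonicity. Given the optimum $(\Delta u_k^*,\delta_k^*)$ at time $k$, I test at time $k+1$ the shifted candidate $\tilde{\Delta u}(j|k+1)=\Delta u^*(j+1|k)$ for $j=0,\dots,m-2$, $\tilde{\Delta u}(m-1|k+1)=0$, and $\tilde\delta_{k+1}=\delta_k^*$. Input-magnitude and rate constraints are inherited from the time-$k$ optimum together with $0\in\Delta\mathbb{U}$ and the convexity of $\mathbb{U}$; the terminal equality (\ref{rest1}) is preserved because the trivial last move keeps $x_s(m-1|k+1)=x_s^*(m-1|k)$. Inserting the candidate in (\ref{Cost}) and using the Lyapunov identity $\bar Q-F^T\bar Q F=F^T\Psi^T Q\Psi F$, the propagated $\bar Q$-term and the new terminal prediction error cancel exactly, giving $V_{k+1}^*\leq V_k^*-\|e^*(0|k)-\delta_k^*\|_Q^2-\|\Delta u^*(0|k)\|_R^2$. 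Telescoping then yields $\Delta u^*(0|k)\to 0$ and $e^*(0|k)-\delta_k^*\to 0$; Assumption~\ref{Assump1} and the recursion $x_d(k+1)=Fx_d(k)+D_d\Delta u^*(0|k+1)$ give $x_d(k)\to 0$.

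The crux is to show $\delta_k^*\to 0$, which is where Assumption~\ref{Assump3} enters. Since $x_s(k)=D_0u(k)$ (from the zero initial condition and (\ref{model1})) and $r=D_0u_r$, the terminal constraint (\ref{rest1}) reads $\delta_k^*=D_0(u^*(m-1|k)-u_r)\in\operatorname{Im}(D_0)$, with a canonical bounded pre-image $v_k:=u_r-u^*(m-1|k)$. Because $\mathbb{U}$ is a bounded rectangle and $\Delta\mathbb{U}$ contains a neighbourhood of the origin, one can fix a single $\alpha_0\in(0,1]$ such that $\alpha_0 v_k\in\Delta\mathbb{U}$ and $(1-\alpha_0)u^*(m-1|k)+\alpha_0 u_r\in\mathbb{U}$ for every $k$. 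Replacing the trivial last move by $\alpha_0 v_k$ in the shift forces $\tilde\delta_{k+1}=(1-\alpha_0)\delta_k^*$; the slack term decreases by $(2\alpha_0-\alpha_0^2)\|\delta_k^*\|_S^2$, while the remaining terms change by a remainder bounded uniformly in $k$ and in $S$ by $\alpha_0 C_1\|\delta_k^*\|+\alpha_0 C_2\|x_d^*(m-1|k)\|+\alpha_0^2 C_3$, with $C_1,C_2,C_3$ depending only on the model data and on $\mathbb{U},\Delta\mathbb{U}$. Choosing $S$ with $\lambda_{\min}(S)$ large enough that the quadratic slack gain dominates this remainder, I obtain $V_k^*-V_{k+1}^*\geq c\|\delta_k^*\|^2$ modulo already-summable quantities, hence $\delta_k^*\to 0$.

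Combining $\delta_k^*\to 0$ with $e^*(0|k)-\delta_k^*\to 0$ and $x_d(k)\to 0$ yields $y(k)\to r$, and a final pass through (\ref{Cost}), using the contractivity of $F$ together with the vanishing first-stage and slack terms and an iteration of the shift to propagate convergence across the prediction horizon, shows that every summand of $V_k^*$ tends to zero. The main obstacle is the third paragraph above: absorbing the linear-in-$\delta$ cross terms arising from the expansion of $\|e^*(j|k)-(1-\alpha_0)\delta_k^*\|_Q^2$ and from the propagation of $\alpha_0 D_d v_k$ through $F$, via Young's inequality and the boundedness inherited from the rectangles, into the quadratic slack gain uniformly in $k$. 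The fact that $\delta_k^*$ admits a uniformly bounded pre-image under $D_0$, guaranteed by Assumption~\ref{Assump3}, is precisely what makes a single $k$-independent choice of $S$ possible.
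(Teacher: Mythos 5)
Your first two paragraphs reproduce the paper's Lemmas \ref{lemma1} and \ref{lemma2} correctly. The genuine gap is in the third paragraph, and it is exactly the point where the non-injectivity of $D_0$ bites. You take as ``canonical bounded pre-image'' of $\delta_k^*$ the vector $v_k=u_r-u^*(m-1|k)$ and push the terminal input by a \emph{fixed} fraction $\alpha_0 v_k$. But when $\ker D_0\neq\{0\}$, $\|v_k\|$ is not controlled by $\|\delta_k^*\|=\|D_0v_k\|$: the component of $v_k$ along $\ker D_0$ can be of the order of $\operatorname{diam}(\mathbb{U})$ while $\delta_k^*$ is arbitrarily small or zero. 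Consequently the move cost $\|\alpha_0 v_k\|_R^2$ (and its propagation through $D_d$ and $F$ into the $Q$- and $\bar Q$-terms) produces the additive remainder you call $\alpha_0^2C_3$, which is a genuine constant, \emph{not} $O(\|\delta_k^*\|^2)$. The slack gain $(2\alpha_0-\alpha_0^2)\|\delta_k^*\|_S^2$ therefore cannot dominate it once $\|\delta_k^*\|$ is small, no matter how large $\lambda_{\min}(S)$ is chosen; at best you bound $\limsup_k\|\delta_k^*\|$ by a positive threshold depending on $S$. The obvious repair --- replace $v_k$ by its component orthogonal to $\ker D_0$, so that $\|v_k\|\le\|\delta_k^*\|/\sigma_{\min}^{+}(D_0)$ and the remainder becomes absorbable --- destroys feasibility, since $u^*(m-1|k)+\alpha_0 (v_k)_\perp$ need not lie in $\mathbb{U}$. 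This tension is precisely what the paper's Proposition \ref{PropApprox} resolves: it measures progress by the distance $\|u^*-P_ru^*\|$ to the affine set $U_r=u_r+\ker D_0$ (made equal to $\|\delta_k^*\|_{\hat S}$ by the special construction of $\hat S$ with $\|D_0v\|_{\hat S}=\|v\|$ on $(\ker D_0)^\perp$), moves toward $\Pi_ru^*$, the nearest point of $\mathbb{U}\cap U_r$, uses the rectangle structure of $\mathbb{U}$ to get a uniform angle bound $\varphi>0$ guaranteeing the move length is proportional to that distance, and runs a $\limsup$-contradiction in which $\alpha$ is taken proportional to a small parameter $\eps$ so that the linear-in-$\eps$ slack gain beats the quadratic-in-$\eps$ losses. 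None of these ingredients is optional in the singular case, and ``Young's inequality and the boundedness inherited from the rectangles'' does not substitute for them.

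A secondary, smaller gap is the final step. Even granting $\delta_k^*\to0$, $e^*(0|k)-\delta_k^*\to0$ and $x_d^*(0|k)\to0$, the cost \eqref{Cost} still contains $\sum_{j=0}^{m-1}\|\Delta u^*(j|k)\|_R^2$, and the telescoping only kills the first move $\Delta u^*(0|k)$; ``an iteration of the shift to propagate convergence across the prediction horizon'' is not an argument, since the optimizer at time $k+1$ need not coincide with the shifted candidate. The paper closes this with a separate contradiction: if $\limsup_k\sum_j\|\Delta u^*(j|k)\|_R^2=c^*>0$, the all-zero input strategy with slack $\tilde\delta_k=\delta_k^*-D_0\sum_j\Delta u^*(j|k)$ is feasible and eventually cheaper. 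You would need this (or a strong-convexity argument relating the optimizer to the shifted candidate) to finish.
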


\begin{thm}[Stability]
	\label{Maintheo2}
	Under Assumptions \ref{Assump1} and \ref{Assump3}, the controller is stable, that is, for any $\eta>0$, there exists $\varepsilon>0$ such that $\|r\| \leq \varepsilon$ implies that $\|e^*(0|k)\| \leq \eta$ for all $k\geq 1$.
\end{thm}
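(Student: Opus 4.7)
The plan is to use the optimal cost $V_k^*$ as a Lyapunov-type function and couple a quadratic upper bound on the initial optimal cost $V_1^*$ with the monotonicity $V_{k+1}^*\le V_k^*$ (a fact that will emerge from the proof of Theorem~\ref{Maintheo}). The goal is to establish $V_k^*\le C\|r\|^2$ for all $k\ge 1$, and then to extract $\|e^*(0|k)\|$ from $V_k^*$ via the positive definiteness of $Q$ and $S$.

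To bound $V_1^*$, I would exhibit an explicit feasible plan at $k=1$. Since $D_0$ need not be regular, I pick $u_r=D_0^{+}r$ (the Moore--Penrose preimage of minimum norm), so that $\|u_r\|\le \|D_0^{+}\|\,\|r\|$. Provided $\|r\|$ is small enough this $u_r$ lies in $\mathbb{U}\cap\Delta\mathbb{U}$, since both rectangles contain a neighborhood of the origin. Consider then the candidate plan $\Delta u(0|1)=u_r$, $\Delta u(j|1)=0$ for $1\le j<m$, and $\delta_1=0$, starting from the zero steady state. A direct evaluation with (\ref{model1})--(\ref{model2}) yields $x_s(j|1)=r$ for every $j\ge 0$, so (\ref{rest1}) holds, and $x_d(j|1)=F^{j}D_d u_r$, whence $e(j|1)=\Psi F^{j}D_d u_r$. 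Using Assumption~\ref{Assump1} to control the finite and terminal sums involving powers of $F$, every term in (\ref{Cost}) is bounded by a constant multiple of $\|u_r\|^2$, and hence $V_1^*\le C_1\|r\|^2$ for a constant $C_1$ depending only on the matrices of the problem.

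Combining this with the monotonicity $V_k^*\le V_1^*$ gives $V_k^*\le C_1\|r\|^2$ for all $k\ge 1$. Since $Q$ and $S$ are positive definite, there exist $q,s>0$ with $\|v\|_Q^2\ge q\|v\|^2$ and $\|v\|_S^2\ge s\|v\|^2$. Extracting the $j=0$ term of the cost gives
\[
q\,\|e^*(0|k)-\delta_k^*\|^2+s\,\|\delta_k^*\|^2\le V_k^*\le C_1\|r\|^2,
\]
and a triangle inequality yields $\|e^*(0|k)\|\le C_2\|r\|$ with $C_2$ independent of $k$ and $r$. Choosing $\varepsilon=\min\{\eta/C_2,\varepsilon_0\}$, where $\varepsilon_0$ is small enough to ensure $u_r\in\mathbb{U}\cap\Delta\mathbb{U}$, completes the proof.

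The most delicate step is the initial bound on $V_1^*$: constructing a feasible plan with cost $O(\|r\|^2)$ forces handling the rank-deficient case of $D_0$, which makes the use of the pseudoinverse essential, together with a smallness condition on $\|r\|$ so that the input constraints are respected. Once this and the monotonicity $V_{k+1}^*\le V_k^*$ are in place, the remaining algebra reduces to a simple extraction of the per-step terms from $V_k^*$.
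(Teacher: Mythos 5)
Your proposal is correct and follows the same overall skeleton as the paper: bound $V_1^*$ by the cost of an explicit feasible plan of order $\|r\|^2$, invoke the monotonicity of $(V_k^*)_{k\ge 1}$ (Lemma \ref{lemma1}), and extract $\|e^*(0|k)\|$ from the $j=0$ terms of the cost. The only substantive difference is the choice of candidate at $k=1$. You drive the system to the set-point in one move, $\Delta u(0|1)=D_0^{+}r$ with $\delta_1=0$, which works but costs you three extra hypotheses: you need $r\in\mathrm{Im}\,D_0$ (supplied by Assumption \ref{Assump3}) so that $D_0D_0^{+}r=r$ and (\ref{rest1}) holds with zero slack, you need the pseudoinverse bound $\|u_r\|\le\|D_0^{+}\|\,\|r\|$, and you need a smallness condition $\|r\|\le\varepsilon_0$ so that $u_r\in\mathbb{U}\cap\Delta\mathbb{U}$. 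The paper instead exploits the unconstrained slack: it takes $\Delta\tilde u(j|1)=0$ for all $j$ and $\tilde\delta_1=-r$, which is feasible for \emph{every} $r$ (the origin lies in $\mathbb{U}$ and $\Delta\mathbb{U}$, and the state never leaves zero, so $e(j|1)-\tilde\delta_1=0$), giving $\tilde V_1=\|r\|_S^2$ with no case analysis and, in fact, without using Assumption \ref{Assump3} at all. Your final extraction via the triangle inequality is equivalent to the paper's parallelogram-law step. In short: your argument is sound, but the do-nothing-plus-slack candidate makes the quadratic bound on $V_1^*$ immediate and unconditional, whereas yours requires the extra feasibility bookkeeping you correctly flag as the delicate step.
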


\subsection{Proofs of Theorems \ref{Maintheo} and \ref{Maintheo2}}\label{Proofs}

We start with several technical results that will be useful to prove Theorems \ref{Maintheo} and \ref{Maintheo2}. The following lemma was already established in Odloak (2004), but for the sake of completeness we present its proof.

\begin{lem}
	Under Assumption \ref{Assump1}, the sequence $(V_k^*)_{k\geq 1}$ is non-increasing.
	\label{lemma1}
\end{lem}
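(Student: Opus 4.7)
The plan is the standard receding-horizon argument: at time $k+1$ I will build a feasible (suboptimal) candidate from the optimal solution at time $k$ by shifting all planned moves one step forward and padding with a zero move, keeping the slack unchanged. Then optimality of $V_{k+1}^*$ together with a direct cost comparison will give $V_{k+1}^*\le V_k^*$.

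\medskip

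First I would define, for $k\ge 1$,
\begin{align*}
\tilde{\Delta u}(j|k+1) := \Delta u^*(j+1|k),\ 0\le j\le m-2,\qquad \tilde{\Delta u}(m-1|k+1):=0,\qquad \tilde{\delta}_{k+1}:=\delta_k^*,
\end{align*}
and check feasibility. Since $0\in\Delta\mathbb{U}$ and each $\Delta u^*(j+1|k)\in\Delta\mathbb{U}$, the move constraints hold. Because $u(k)=u(k-1)+\Delta u^*(0|k)$, the cumulative-input constraints at time $k+1$ reduce term-by-term to the corresponding constraints already satisfied by $\Delta u_k^*$ at time $k$, with the last (padded) one coinciding with the $i=m-1$ constraint at time $k$. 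For the terminal constraint, I would compute $x_s(m-1|k+1)$ using (\ref{model1}) with the shifted sequence: the total input increment over the new horizon equals the total increment over the old horizon, so $x_s(m-1|k+1)=x_s^*(m-1|k)=r+\delta_k^*=r+\tilde{\delta}_{k+1}$ and (\ref{rest1}) holds.

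\medskip

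Next I would evaluate $V_{k+1}$ on this candidate. By construction $e(j|k+1)=e^*(j+1|k)$ for $0\le j\le m-2$, which shifts the first sum in (\ref{Cost}) and removes precisely the $j=0$ term. The input-cost sum analogously loses the $\|\Delta u^*(0|k)\|_R^2$ term. The subtle piece is the terminal block. Since $\tilde{\Delta u}(m-1|k+1)=0$, the dynamic state satisfies $x_d(m-1|k+1)=Fx_d^*(m-1|k)$, hence by (\ref{model2}) and the terminal constraint
\begin{align*}
e(m-1|k+1)-\delta_k^* = \Psi F x_d^*(m-1|k).
\end{align*}
Invoking the Lyapunov-type identity $\bar{Q}-F^T\bar{Q}F=F^T\Psi^T Q\Psi F$ stated after (\ref{Cost}), the new endpoint contribution collapses as
\begin{align*}
\big\|e(m-1|k+1)-\delta_k^*\big\|_Q^2+\big\|x_d(m-1|k+1)\big\|_{\bar{Q}}^2 = \big\|x_d^*(m-1|k)\big\|_{\bar{Q}}^2,
\end{align*}
which is exactly the terminal term appearing in $V_k^*$. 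Collecting the three cancellations I expect to obtain
\begin{align*}
V_{k+1}(\tilde{\Delta u},\tilde{\delta}_{k+1}) = V_k^* - \big\|e^*(0|k)-\delta_k^*\big\|_Q^2 - \big\|\Delta u^*(0|k)\big\|_R^2.
\end{align*}

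\medskip

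Finally, since $Q,R$ are positive semidefinite (indeed positive definite), the subtracted terms are nonnegative, and by optimality $V_{k+1}^*\le V_{k+1}(\tilde{\Delta u},\tilde{\delta}_{k+1})\le V_k^*$, which is the claim. The only real obstacle is the endpoint computation—matching the indices carefully between the two horizons and applying the Lyapunov identity to absorb the "new" output deviation into $\bar{Q}$; everything else is bookkeeping on the shifted sequence and direct use of the constraint (\ref{rest1}) and Assumption~\ref{Assump1}.
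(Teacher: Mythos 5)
Your proposal is correct and follows essentially the same route as the paper: the shifted-and-zero-padded candidate with unchanged slack, the feasibility check via the telescoped input constraints and the invariance of $x_s(m-1|\cdot)$, and the collapse of the new terminal terms through the identity $\bar{Q}-F^T\bar{Q}F=F^T\Psi^TQ\Psi F$. The cost decrease you obtain, $\tilde{V}_{k+1}-V_k^*=-\|e^*(0|k)-\delta_k^*\|_Q^2-\|\Delta u^*(0|k)\|_R^2$, matches the paper's computation exactly.
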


\begin{proof}
At time step $k+1$, let $\Delta \tilde{u}_{k+1}, \tilde{\delta}_{k+1}$ be such that
\begin{align*}
\Delta \tilde{u}(j|k+1) &= \Delta u^*(j+1|k), \text{ for } j=0,1,\dots,m-2, \\
\Delta \tilde{u}(m-1|k+1) &= 0, \\
\tilde{\delta}_{k+1} &= \delta_k^*.
\end{align*}
Note that $\Delta \tilde{u}_{k+1}, \tilde{\delta}_{k+1}$ is a feasible strategy for the control optimization problem at time step $k+1$ (by ``feasible strategy'' we mean that $\Delta \tilde{u}_{k+1}, \tilde{\delta}_{k+1}$ satisfies all the constraints of the optimization problem). Indeed, by letting $e_s(j|k) := x_s(j|k) - r$, we have that
\begin{align*}
e_s^*(0|k) + D_0\sum_{j=0}^{m-1}\Delta \tilde{u}(j|k+1) - \tilde{\delta}_{k+1} &= e_s^*(0|k) + D_0\sum_{j=1}^{m-1}\Delta u^*(j|k) - \delta^*_{k} \\
&= e_s^*(0|k-1) + D_0\sum_{j=0}^{m-1}\Delta u^*(j|k) - \delta_k^* = 0,
\end{align*}
so that (\ref{rest1}) is satisfied. Moreover, note that $\Delta \tilde{u}(j|k+1) \in \Delta\mathbb{U}$ for $0\leq j \leq m-1$, and recalling that $\Delta u^*(m|k)=0$, we obtain
\begin{align*}
u^*(k) + \sum_{j=0}^{i} \Delta \tilde{u}(j|k+1) = u^*(k) + \sum_{j=1}^{i+1} \Delta u^*(j|k) = u^*(k-1) + \sum_{j=0}^{i+1} \Delta u^*(j|k) \in \mathbb{U}
\end{align*}
for $0\leq i \leq m-1$.
 
The cost corresponding to this strategy is
\begin{align*}
\tilde{V}_{k+1} &= \sum_{j=0}^{m-1} \big\|\tilde{e}(j|k+1)-\tilde{\delta}_{k+1}\big\|_Q^2 + \big\|\tilde{x}_d(m-1|k+1)\big\|_{\bar{Q}}^2 + \sum_{j=0}^{m-1} \big\|\Delta \tilde{u}(j|k+1)\big\|_R^2 + \big\|\tilde{\delta}_{k+1}\big\|_S^2 \\
&= \sum_{j=0}^{m-1} \big\|\tilde{e}(j|k+1)-\delta^*_{k}\big\|_Q^2 + \big\|\tilde{x}_d(m-1|k+1)\big\|_{\bar{Q}}^2 + \sum_{j=1}^{m-1} \big\|\Delta u^*(j|k)\big\|_R^2 + \big\|\delta^*_{k}\big\|_S^2
\end{align*}
and, since $\tilde{e}(j|k+1) = e^*(j+1|k)$ for $0\leq j \leq m-2$, we have that
\begin{align*}
\tilde{V}_{k+1} - V_k^* = \big\|\tilde{e}(m-1|k+1)-\delta^*_{k}\big\|_Q^2 -\big\|e^*(0|k)-\delta_k^*\big\|_Q^2 + \big\|\tilde{x}_d(m-1|k+1)\big\|_{\bar{Q}}^2 \\
- \big\|x_d^*(m-1|k)\big\|_{\bar{Q}}^2 - \big\|\Delta u^*(0|k)\big\|_R^2.
\end{align*}
Now observe that 
\begin{align*}
\tilde{e}(m-1|k+1)-\delta^*_{k} = \tilde{e}_s(m-1|k+1) + \Psi\tilde{x}_d(m-1|k+1) - \delta^*_{k} = \Psi\tilde{x}_d(m-1|k+1),
\end{align*}
\begin{align*}
\tilde{x}_d(m-1|k+1) = F \tilde{x}_d(m-2|k+1) + D_d \Delta \tilde{u}(m-1|k+1) = F \tilde{x}_d(m-2|k+1)
\end{align*}
and
\begin{align*}
\tilde{x}_d(m-2|k+1) &= F^{m-1}x_d^*(0|k) + \sum_{i=0}^{m-2} F^{m-2-i} D_d\Delta \tilde{u}(i|k+1) \\
&= F^{m-1}x_d^*(0|k) + \sum_{i=1}^{m-1} F^{m-1-i} D_d\Delta u^*(i|k) \\
&= x_d^*(m-1|k),
\end{align*}
so that
\begin{align*}
\big\|\tilde{e}(m-1|k+1)-\delta^*_{k}\big\|_Q^2 + \big\|\tilde{x}_d(m-1|k+1)\big\|_{\bar{Q}}^2 &= \big\|\Psi Fx_d^*(m-1|k)\big\|_Q^2 + \big\|Fx_d^*(m-1|k)\big\|_{\bar{Q}}^2 \\
&= \big\|x_d^*(m-1|k)\big\|_{(\Psi F)^TQ(\Psi F)+F^T\bar{Q}F}^2 \\
&= \big\|x_d^*(m-1|k)\big\|_{\bar{Q}}^2
\end{align*}
and then
\begin{align*}
\tilde{V}_{k+1} - V_k^* = -\big\|e^*(0|k)-\delta_k^*\big\|_Q^2 - \big\|\Delta u^*(0|k)\big\|_R^2 \leq 0.
\end{align*}

Since the strategy $\Delta \tilde{u}_{k+1}, \tilde{\delta}_{k+1}$ at time step $k+1$ is not necessarily the optimal one, we have that $V_{k+1}^* \leq \tilde{V}_{k+1}$ and the sequence $(V_k^*)_{k\geq 1}$ is non-increasing.
\end{proof}

\begin{lem}
	Under Assumption \ref{Assump1}, we have that
	\begin{equation*}
	x_d^*(0|k) \to 0,~\text{ as } k\to\infty. 
	\end{equation*}
	\label{lemma2}
\end{lem}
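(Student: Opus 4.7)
The plan is to extract the lemma from the bookkeeping already carried out in the proof of Lemma \ref{lemma1}. There the shifted suboptimal strategy $(\Delta\tilde u_{k+1},\tilde\delta_{k+1})$ yields the explicit identity
\begin{equation*}
\tilde V_{k+1}-V_k^* \;=\; -\big\|e^*(0|k)-\delta_k^*\big\|_Q^2 - \big\|\Delta u^*(0|k)\big\|_R^2,
\end{equation*}
which together with the suboptimality bound $V_{k+1}^*\leq \tilde V_{k+1}$ gives the per-step Lyapunov decrease
\begin{equation*}
V_k^*-V_{k+1}^* \;\geq\; \big\|e^*(0|k)-\delta_k^*\big\|_Q^2 + \big\|\Delta u^*(0|k)\big\|_R^2.
\end{equation*}
Since $(V_k^*)_{k\geq 1}$ is non-increasing by Lemma \ref{lemma1} and bounded below by $0$, it converges and its increments vanish; positive definiteness of $R$ then forces $\Delta u^*(0|k)\to 0$ as $k\to\infty$.

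Next I would transfer this input decay to the dynamic state. By the convention consistent with the proof of Lemma \ref{lemma1}, $x_d^*(0|k)$ coincides with the actual dynamic state $x_d(k)$ of the controlled plant, and the state equation (\ref{model1}) together with $u(k+1)-u(k)=\Delta u^*(0|k+1)$ gives the closed-loop recursion
\begin{equation*}
x_d^*(0|k+1) \;=\; F\,x_d^*(0|k) + D_d\,\Delta u^*(0|k+1).
\end{equation*}
Using the initial condition $x_d(0)=0$, this unrolls to $x_d^*(0|k) = \sum_{i=1}^{k} F^{k-i} D_d\,\Delta u^*(0|i)$.

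Finally, I would invoke Assumption \ref{Assump1}: since the spectral radius of $F$ is strictly less than $1$, there is a norm $\|\cdot\|_*$ on $\C^{n_d}$ whose induced operator norm satisfies $\|F\|_*=\rho<1$. A routine convolution estimate---if $a_i\to 0$ and $0\leq\rho<1$ then $\sum_{i=1}^{k}\rho^{k-i}a_i\to 0$, obtained by splitting the sum at an index $N$ beyond which $a_i<\varepsilon$ and bounding the tail by $\varepsilon/(1-\rho)$---applied with $a_i=\|D_d\,\Delta u^*(0|i)\|_*$ yields $\|x_d^*(0|k)\|_*\to 0$, hence $x_d^*(0|k)\to 0$ in any norm. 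The only delicate point is to work in a norm adapted to the spectral radius of $F$ rather than the standard Euclidean one; everything else is immediate from the proof of Lemma \ref{lemma1}.
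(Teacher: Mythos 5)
Your proposal is correct and follows essentially the same route as the paper: first extract $\Delta u^*(0|k)\to 0$ from the monotone convergence of $(V_k^*)_{k\geq 1}$, then propagate this through the variation-of-constants formula for $x_d^*(0|\cdot)$ using the geometric decay of the powers of $F$. The only (cosmetic) difference is that you unroll the recursion from time $0$ in a norm adapted to $F$ with $\|F\|_*<1$, whereas the paper splits at a large time $k_0$ and uses $\sum_{j\geq 0}\|F^j\|<\infty$ in the Euclidean norm; both are the same convolution estimate.
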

\begin{proof}
From the proof of Lemma \ref{lemma1}, we have that
\begin{align*}
V_{k+1}^* - V_k^* \leq -\big\|e^*(0|k)-\delta_k^*\big\|_Q^2 - \big\|\Delta u^*(0|k)\big\|_R^2.
\end{align*}
Observe that $(V_k^*)_{k\geq 1}$ converges since it is non-increasing and non-negative. Therefore we deduce that
\begin{align}
\label{tozero}
\big\|e^*(0|k)-\delta_k^*\big\|_Q^2 \xrightarrow{k\to\infty} 0 ~\text{ and }~ \big\|\Delta u^*(0|k)\big\|_R^2 \xrightarrow{k\to\infty} 0.
\end{align}
Moreover, for $p\geq 1$, we have that
\begin{align*}
x_d^*(0|k+p) = F^{p} x_d^*(0|k) + \sum_{j=0}^{p-1} F^j D_d \Delta u^*(0|k+p-j).
\end{align*}
Now, let $\|\cdot\|$ be the euclidean norm on $\R^{n_d}$. By Assumption \ref{Assump1},  we have that
\begin{align*}
\sum_{j=0}^{\infty} \|F^j\| < \infty.
\end{align*}
Hence, for any $\varepsilon>0$, there exists $k_0\geq 1$ such that
\begin{align*}
\sup_{p\geq 1}\big\|x_d^*(0|k_0+p) - F^{p} x_d^*(0|k_0)\big\| \leq \sup_{j\geq 1}\big\|D_d \Delta u^*(0|k_0+j)\big\|\sum_{j=0}^{\infty} \|F^j\| < \varepsilon/2,
\end{align*}
and, since $\displaystyle\lim_{j\to\infty}F^j=0$, there exists $p_0 \geq 1$ such that, for all $p\geq p_0$, 
\begin{align*}
\big\|F^{p} x_d^*(0|k_0)\big\| < \varepsilon/2.
\end{align*}
Finally, for all $p\geq p_0$,
\begin{align*}
\big\|x_d^*(0|k_0+p)\big\| \leq \big\|F^{p} x_d^*(0|k_0)\big\| + \sup_{p\geq 1}\big\|x_d^*(0|k_0+p) - F^{p} x_d^*(0|k_0)\big\| < \varepsilon,
\end{align*}
that is, $x_d^*(0|k) \to 0$ as $k\to\infty$.
\end{proof}

We now equip $\R^{n_u}$ with the euclidean norm $\|\cdot\|$ and the associated scalar product $\langle \cdot, \cdot\rangle$. We decompose 
$$\R^{n_u}=\ker{D_0}\oplus (\ker{D_0})^{\perp}.$$ 
We also denote by $D_0^{\perp}$ the restriction of $D_0$ to $(\ker{D_0})^{\perp}$ with values in $\text{Im}D_0\subset \R^{n_y}$. Observe that $D_0^{\perp}$ is a linear isomorphism.

 \begin{lem}
	Under Assumption \ref{Assump1} we have that
	\begin{equation*}
	 \Big(\sum_{j=0}^{m-1} \Delta u^*(j|k)\Big)_\perp \to 0, ~\text{ as } k\to\infty. 
	\end{equation*}
	\label{lemma3}
\end{lem}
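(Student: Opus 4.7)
The strategy is to reduce the claim to showing that $D_0 w_k\to 0$, where $w_k:=\sum_{j=0}^{m-1}\Delta u^*(j|k)$. Writing $w_k=(w_k)_{\ker D_0}+(w_k)_\perp$, we have $D_0 w_k=D_0^\perp(w_k)_\perp$, and since $D_0^\perp$ is a linear isomorphism between finite-dimensional spaces its inverse is continuous. Convergence of $D_0 w_k$ to $0$ is therefore equivalent to convergence of $(w_k)_\perp$ to $0$.

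First I would extract $D_0 w_k$ from the terminal constraint. Iterating the static part of (\ref{model1}) and substituting into (\ref{rest1}) produces the identity
$$x_s^*(0|k-1)+D_0 w_k-\delta_k^*-r=0,$$
already used in the proof of Lemma \ref{lemma1}. Using the one-step update $x_s^*(0|k)=x_s^*(0|k-1)+D_0\Delta u^*(0|k)$ to eliminate $x_s^*(0|k-1)$, this rearranges to
$$D_0 w_k=-\bigl(x_s^*(0|k)-r-\delta_k^*\bigr)+D_0\Delta u^*(0|k).$$

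It then remains to show that both terms on the right-hand side tend to zero. The second follows at once from (\ref{tozero}) in the proof of Lemma \ref{lemma2}, since $R$ is positive definite. For the first, the output equation (\ref{model2}) gives
$$e^*(0|k)-\delta_k^*=\bigl(x_s^*(0|k)-r-\delta_k^*\bigr)+\Psi x_d^*(0|k),$$
and (\ref{tozero}) (using positive definiteness of $Q$) together with Lemma \ref{lemma2} forces the right-hand side to zero in Euclidean norm; hence $x_s^*(0|k)-r-\delta_k^*\to 0$. Combining, $D_0 w_k\to 0$, and by the equivalence above $(w_k)_\perp\to 0$.

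The argument is essentially bookkeeping: the only step requiring any real care is aligning the index shift between $x_s^*(0|k-1)$ (initial condition of the MPC problem at time $k$) and $x_s^*(0|k)$ with the convention used in Lemma \ref{lemma1}, after which everything reduces to linear algebra and to the two convergences already in hand.
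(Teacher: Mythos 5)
Your proposal is correct and follows essentially the same route as the paper: both arguments combine the terminal constraint (\ref{rest1}) with the two convergences in (\ref{tozero}) and Lemma \ref{lemma2} to deduce $x_s^*(0|k)-r-\delta_k^*\to 0$, and then invoke the isomorphism $D_0^\perp$ to pass from $D_0(\cdot)$ to the $(\cdot)_\perp$ component. The only difference is cosmetic bookkeeping (you fold $\Delta u^*(0|k)$ into the identity before applying the isomorphism, whereas the paper adds it afterwards).
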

\begin{proof}
Recall that $e_s(j|k) := x_s(j|k) - r$. Since 
\begin{align*}
e^*(0|k) = e_s^*(0|k) + \Psi x_d^*(0|k)
\end{align*}
and 
\begin{align*}
e^*(0|k)-\delta_k^* \xrightarrow{k\to\infty} 0 ~~\text{ and }~~ x_d^*(0|k) \xrightarrow{k\to\infty} 0,
\end{align*}
we have that
\begin{align*}
e_s^*(0|k)-\delta_k^*\xrightarrow{k\to\infty} 0.
\end{align*}
But
\begin{align*}
e_s^*(0|k) - \delta^*_{k} = -D_0\Big(\sum_{j=1}^{m-1}\Delta u^*(j|k)\Big)_\perp=-D^{\perp}_0\Big(\sum_{j=1}^{m-1}\Delta u^*(j|k)\Big)_\perp
\end{align*}
so that, since $D^\perp_0$ is a linear isomorphism,
\begin{align*}
\Big(\sum_{j=1}^{m-1}\Delta u^*(j|k)\Big)_\perp \xrightarrow{k\to\infty} 0
\end{align*}
and finally, by (\ref{tozero}), we obtain
\begin{align*}
\Big(\sum_{j=0}^{m-1}\Delta u^*(j|k) \Big)_\perp\xrightarrow{k\to\infty} 0.
\end{align*}
\end{proof}

Recall Assumption \ref{Assump3} and  let us denote by $U_r:=u_r+\ker D_0$. Observe that for any $u\in U_r$, $D_0u=r$. Let us also denote by $P_r$ the orthogonal projection on the affine subspace $U_r$. We recall that $P_r$ is not a linear map unless $r=\bf{0}$.

We now build up the matrix $S$ (in the canonical basis of $\R^{n_y}$) that will be considered later on. For this, fix respectively an orthonormal basis of $(\ker D_0)^{\perp}$, $\{v_1,v_2,\dots, v_k\}$, an orthonormal basis of $\text{Im} D_0$, $\{w_1,w_2,\dots, w_k\}$, and an orthonormal basis of $(\text{Im} D_0)^{\perp}$, $\{w_{k+1},w_{k+2},\dots, w_{n_y}\}$. Consider the square matrix $M$ of $D_0^\perp$ in the first two basis. Using the $LQ$-factorization, there exists a lower triangular matrix $L$ and an orthogonal matrix $O$ such that $M=LO$. Since $M$ is regular, we have that $L$ is regular. Now, consider the  matrix $K_1$ (resp.~$K_2$) of dimension $k \times n_y$ (resp.~$(n_y-k) \times n_y$) whose column vectors are the orthogonal projections of the canonical vectors of $\R^{n_y}$ on $\{w_1,w_2,\dots,w_k\}$ (resp.~$\{w_{k+1},w_{k+2},\dots, w_{n_y}\}$). Define $\hat{S}=K_1^T(L^{-1})^T L^{-1}K_1 + K_2^T K_2$, which is positive definite on $\R^{n_y}$. Furthermore, we can check that for all $v\in (\ker D_0)^{\perp}$, $\|D_0v\|^2_{\hat{S}}=(D_0v)^T \hat{S}D_0v=\|v\|^2$. 

\begin{prop}
\label{PropApprox}
Consider $S=\beta \hat{S}$ with $\beta$ a positive real number. We can choose $\beta$ large enough (depending only on the parameters of the model other than $S$) such that under Assumptions \ref{Assump1} and \ref{Assump3} we have that
$$
\limsup_{k\to \infty} \|u^*(0| k)-P_r u^*(0|k)\|=0.
$$
\end{prop}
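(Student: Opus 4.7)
The plan is to reduce the proposition to showing that $\delta_k^*\to 0$, and then to drive the slack to zero via a feasible shift-plus-correction strategy at time $k+1$ whose dominant cost contribution for large $\beta$ is $-\beta\|\sigma_k^*\|^2$ for $\sigma_k^*$ defined below.

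First, telescoping the static state equation from $x_s(0)={\bf 0}$ and $u(0)={\bf 0}$ yields $x_s^*(0|k)=D_0 u^*(0|k)$, so Assumption \ref{Assump3} gives $e_s^*(0|k)=D_0(u^*(0|k)-u_r)$. By definition of the orthogonal projection onto $U_r=u_r+\ker D_0$, the vector $u^*(0|k)-P_r u^*(0|k)$ lies in $(\ker D_0)^{\perp}$; applying $D_0$ yields $D_0(u^*(0|k)-P_r u^*(0|k))=e_s^*(0|k)$, and since $D_0^{\perp}$ is a linear isomorphism, $\|u^*(0|k)-P_r u^*(0|k)\|$ is equivalent to $\|e_s^*(0|k)\|$. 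Combining (\ref{tozero}) with Lemma \ref{lemma2} gives $e_s^*(0|k)-\delta_k^*\to 0$, so it suffices to show $\delta_k^*\to 0$.

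The algebraic key step is to note that the terminal constraint (\ref{rest1}) together with Assumption \ref{Assump3} force $\delta_k^*=x_s^*(m-1|k)-r \in \text{Im} D_0$; hence $\delta_k^*=D_0\sigma_k^*$ for a unique $\sigma_k^*\in(\ker D_0)^{\perp}$, and by the construction of $\hat S$, $\|\delta_k^*\|_{\hat S}^2=\|\sigma_k^*\|^2$. At time $k+1$ I would then consider, for $\lambda\in[0,1]$, the strategy
\begin{equation*}
\Delta\tilde u(j|k+1)=\Delta u^*(j+1|k)\;\text{ for }0\leq j\leq m-2,\qquad \Delta\tilde u(m-1|k+1)=-\lambda\sigma_k^*,
\end{equation*}
which, mimicking the computation in the proof of Lemma \ref{lemma1} but with a nonzero last move, gives $\tilde\delta_{k+1}=(1-\lambda)\delta_k^*$. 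The slack contribution to $\tilde V_{k+1}-V_k^*$ is thus exactly $-\beta\lambda(2-\lambda)\|\sigma_k^*\|^2$; using the identity $\bar Q-F^T\bar Q F=F^T\Psi^T Q\Psi F$ to handle the $x_d$-terms as in Lemma \ref{lemma1} and expanding the $Q$- and $R$-quadratics, the remaining terms combine into $\lambda G_k+\lambda^2 H_k-\|e^*(0|k)-\delta_k^*\|_Q^2-\|\Delta u^*(0|k)\|_R^2$, with $G_k,H_k$ bounded uniformly in $k$ and independently of $\beta$ (via the model parameters and a $\beta$-independent upper bound on $V_1^*$ obtained by exhibiting an explicit feasible strategy at $k=1$ that achieves $\delta_1=0$).

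The conclusion then follows by contradiction: if $\delta_k^*\not\to 0$, extract a subsequence $(k_n)$ with $\|\sigma_{k_n}^*\|\geq\varepsilon_0>0$ and fix $\lambda_0\in(0,1]$ small enough so that the corrective last move lies in $\Delta\mathbb{U}$ and the cumulative input stays in $\mathbb{U}$ (using that both sets contain the origin and that the a priori bound $\|\sigma_k^*\|^2\leq V_1^*/\beta$ makes the correction small for $\beta$ large). Then the cost difference reads $V_{k_n+1}^*-V_{k_n}^*\leq -\beta\lambda_0\varepsilon_0^2+C$, which for $\beta$ sufficiently large is bounded away from $0$, contradicting the convergence of the non-increasing sequence $(V_k^*)$. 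The main obstacle in this proposal is exactly this feasibility bookkeeping---controlling the corrective move uniformly in $k$ so that it respects $\Delta\mathbb{U}$ and the cumulative $\mathbb{U}$-constraints---together with the $\beta$-independent bound on $V_1^*$ that underlies the boundedness of $G_k$ and $H_k$.
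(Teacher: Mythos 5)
Your reduction of the proposition to $\delta_k^*\to 0$ is correct, and your overall mechanism (exhibit a feasible strategy that contracts the slack by a fixed factor, so that the $\beta$-weighted slack gain dominates all other, $\beta$-independent terms) is the same one the paper uses. But there is a genuine gap exactly where you flag ``the main obstacle'': feasibility of the corrective move. Your last move is $-\lambda\sigma_k^*$ with $\sigma_k^*=(u^*(m-1|k)-u_r)_\perp$, so the new terminal cumulative input is $(1-\lambda)u^*(m-1|k)+\lambda P_r u^*(m-1|k)$, a point on the segment from $u^*(m-1|k)$ to its \emph{orthogonal} projection onto the affine set $U_r$. That projection need not lie in $\mathbb{U}$, and in the very scenario you are trying to rule out ($\delta_k^*\not\to 0$) the input $u^*(m-1|k)$ is typically pinned on the boundary of $\mathbb{U}$ with the segment toward $P_r u^*(m-1|k)$ leaving $\mathbb{U}$; then no $\lambda>0$, however small, is feasible, and convexity of $\mathbb{U}$ alone cannot save you. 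This is precisely why the paper replaces $P_r$ by $\Pi_r$, the nearest point in $\mathbb{U}\cap U_r$ (nonempty since $u_r\in\mathbb{U}$ by Assumption \ref{Assump3}), moves along $\Pi_r u^*(0|k-1)-u^*(0|k-1)$, which is feasible by convexity of $\mathbb{U}$, and then pays for the discrepancy between $\Pi_r$ and $P_r$ through the angle bound $\varphi=\inf_x|\cos\theta_x|>0$, which exploits the rectangle structure of $\mathbb{U}$. Without an analogue of this device your cost comparison cannot be carried out on the bad subsequence.

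A second, smaller issue: your $\beta$-independent bounds on $G_k,H_k$ rest on a feasible strategy at $k=1$ achieving $\delta_1=0$, which requires reaching $u_r$ in $m$ moves each lying in $\Delta\mathbb{U}$; the assumptions guarantee only $u_r\in\mathbb{U}$, and the obvious fallback $\tilde\delta_1=-r$ gives $V_1^*\leq\beta\|r\|_{\hat S}^2$, which grows with $\beta$. This point is repairable, since $\|\sigma_k^*\|\leq\|(D_0^\perp)^{-1}\|\,\|D_0\|\,\mathrm{diam}(\mathbb{U})$ directly from $u^*(m-1|k),u_r\in\mathbb{U}$, but as written the justification does not hold. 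Note finally that the paper makes its comparison at time $k$ (a fresh strategy starting from $u^*(0|k-1)$) and contradicts optimality of $V_k^*$ outright, rather than at time $k+1$ against convergence of the non-increasing sequence; both framings are acceptable, and the feasibility of the correction is the real sticking point.
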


\begin{proof}
The proof goes by contradiction, that is, assume that $\limsup_{k\to \infty} \|u^*(0| k)-P_r u^*(0|k)\|=c>0$. 
Then, for all $\eps>0$ there exists a subsequence $(k_n)_{n\geq 1}$ such that for all $n\geq 1$,
\begin{equation*}
 \|u^*(0| k_n-1)-P_ru^*(0|k_n-1)\|\geq c-\frac{\eps}{2}.
\end{equation*}
By Lemma \ref{lemma3}, there exists $n_0$ such that, for $n\geq n_0$, 
\begin{equation*}
	\|(u^*(m-1|k_n)-u^*(0|k_n-1))_{\perp}\|\leq \frac{\eps}{2}.
\end{equation*}
This implies that for $n\geq n_0$,
\begin{equation}
\label{cotainf}
\|u^*(m-1|k_n)-P_ru^*(m-1|k_n)\|\geq c-\eps.
\end{equation}
Indeed, applying the triangle inequality and using the fact that $(P_r v-P_r w)_{\perp}=\bf{0}$ for all $v, w\in \R^{n_u}$, we obtain
\begin{align*}
\|u^*(m-1|k_n)-P_ru^*(m-1|k_n)\|&=\|(u^*(m-1|k_n)-P_ru^*(m-1|k_n))_{\perp}\|\nonumber\\
&\geq \|(u^*(0|k_n-1)-P_ru^*(m-1|k_n))_{\perp}\|\\
&\phantom{***}-\|(u^*(m-1|k_n)-u^*(0|k_n-1))_{\perp}\|\\
&= \|u^*(0|k_n-1)-P_ru^*(0|k_n-1)\|\\
&\phantom{***}-\|(u^*(m-1|k_n)-u^*(0|k_n-1))_{\perp}\|\\
&\geq c-\frac{\eps}{2}-\frac{\eps}{2}\\
&=c-\eps.
\end{align*}
Also, there exists $k_0$ such that for all $k\geq k_0$ we have
\begin{equation}
\label{cotasup}
\|u^*(0|k-1)-P_r u^*(0|k-1)\|\leq c+\eps.
\end{equation}
Then for some $\alpha\in (0,1]$ to be chosen later, consider the following strategy at time $k$ such that $k\geq k_0$ and $k=k_n$ for some $n\geq n_0$.
$$
\Delta\tilde{u}(0|k)=\alpha (\Pi_r(u^*(0|k-1))-u^*(0|k-1))
$$
where $\Pi_rx$ is the point in $\mathbb{U}\cap U_r$ such that $\|x-\Pi_r x\|=\text{dist}(x,\mathbb{U}\cap U_r)$,
$\Delta\tilde{u}(j|k)=0$ for $j\geq 1$ and $\tilde{\delta}_k$ given by
$$
\tilde{\delta}_k=e^*_s(0|k-1)+D_0\Delta\tilde{u}(0|k).
$$
Since $\mathbb{U}$ is convex, we can choose $\alpha$ small enough such that this strategy is indeed feasible. We now compute the cost of this strategy and compare it to the optimal cost. First, observe that 
\begin{align*}
\tilde{\delta}_k&=e_s^*(0|k-1)+D_0\Delta\tilde{u}(0|k)\\
&=D_0u^*(0|k-1)-r+\alpha D_0[\Pi_ru^*(0|k-1)-u^*(0|k-1)]\\
&=(1-\alpha)D_0u^*(0|k-1)-r +\alpha r\\
&=(1-\alpha)(D_0u^*(0|k-1)-r)\\
&=(1-\alpha)D_0[u^*(0|k-1)-\Pi_ru^*(0|k-1)].
\end{align*}
Thus, using (\ref{cotasup}), we obtain that
\begin{align*}
\|\tilde{\delta}_k\|_S^2&=(1-\alpha)^2\|D_0[u^*(0|k-1)-\Pi_ru^*(0|k-1)]\|_S^2\\
&= (1-\alpha)^2\|D_0[u^*(0|k-1)-\Pi_ru^*(0|k-1)]_\perp\|_S^2\\
&=(1-\alpha)^2\beta\|[u^*(0|k-1)-\Pi_ru^*(0|k-1)]_\perp\|^2\\
&=(1-\alpha)^2\beta\|u^*(0|k-1)-P_ru^*(0|k-1)\|^2\\
&\leq (1-\alpha)^2\beta(c+\eps)^2.
\end{align*}
On the other hand, we have
\begin{align*}
\delta_k^*&=e_s^*(0|k-1)+D_0\sum_{j=0}^{m-1}\Delta u^*(j|k)\\
&=D_0u^*(m-1|k)-r\\
&=D_0[u^*(m-1|k)-\Pi_ru^*(m-1|k)]_\perp.
\end{align*}
Hence, using (\ref{cotainf}), we obtain that
\begin{align*}
\|\delta_k^*\|_S^2&=\|D_0[u^*(m-1|k)-\Pi_ru^*(m-1|k)]_\perp\|_S^2\\
&=\beta\|[u^*(m-1|k)-\Pi_ru^*(m-1|k)]_\perp\|^2\\
&=\beta\|u^*(m-1|k)-P_ru^*(m-1|k)\|^2\\
&\geq \beta(c-\eps)^2.
\end{align*}
After some elementary computation, we obtain that 
\begin{align*}
V^*_k-\tilde{V}_k&\geq \|\delta_k^*\|_S^2-\|\tilde{\delta}_k\|_S^2-\|\Delta \tilde{u}(0|k)\|_Z^2\\ 
&\phantom{**}-2\Big(\sum_{j=0}^{m-1}\langle\Psi F^{j+1}x_d^*(0|k-1),\Psi  F^jD_d\Delta\tilde{u}(0|k)\rangle_Q+\langle F^mx_d^*(0|k-1),  F^{m-1}D_d\Delta\tilde{u}(0|k) \rangle_{\bar{Q}}\Big)
\end{align*}
where 
\begin{align}
\label{DefG}
Z&=R+\sum_{j=0}^{m-1}(D_d)^T(F^j)^T\Psi^T Q\Psi F^jD_d+D_d^T(F^{m-1})^T\bar{Q}F^{m-1}D_d\nonumber\\
&=:R+D_d^TGD_d.
\end{align}
Using the Cauchy-Schwarz inequality, we have that
\begin{align*}
V^*_k-\tilde{V}_k&\geq \|\delta_k^*\|_S^2-\|\tilde{\delta}_k\|_S^2-\|\Delta \tilde{u}(0|k)\|_Z^2\\ 
&\phantom{**}-2\Big(\sum_{j=0}^{m-1}\|\Psi F^{j+1}x_d^*(0|k-1)\|_Q\|\Psi  F^jD_d\Delta\tilde{u}(0|k)\|_Q\\
&\;\;\;\;\;\;\;\;\;\;\;\;\;\;\;\;\;\;\;\;\;\;+\| F^mx_d^*(0|k-1)\|_{\bar{Q}}\| F^{m-1}D_d\Delta\tilde{u}(0|k) \|_{\bar{Q}}\Big)\\
&\geq \|\delta_k^*\|_S^2-\|\tilde{\delta}_k\|_S^2-\|\Delta \tilde{u}(0|k)\|_Z^2-C_1\| x_d^*(0|k-1)\|\|\Delta\tilde{u}(0|k)\|
\end{align*}
for some positive constant $C_1$ that depends on $m, \Psi, F, D_d$ and $Q$.
Thus, we obtain that
\begin{equation*}
V^*_k-\tilde{V}_k\geq \|\delta_k^*\|_S^2-\|\tilde{\delta}_k\|_S^2-C_2\|\Delta \tilde{u}(0|k)\|(\|\Delta\tilde{u}(0|k)\|+\| x_d^*(0|k-1)\|)
\end{equation*}
for some positive constant $C_2$ that depends on $m, R, \Psi, F, D_d$ and $Q$.

Now let $\theta_x$ be the angle between $P_rx -x$ and $\Pi_r x-x$ for $x\in \mathbb{U}\setminus \ker D_0$. Since $\mathbb{U}$ is a rectangle we have that $\varphi:=\inf_{x\in \mathbb{U}\setminus \ker D_0}|\cos \theta_x|>0$ and therefore $\|\Delta \tilde{u}(0|k)\|\leq \alpha\varphi^{-1}(c+\eps)$.
Now using Lemma \ref{lemma2}, we can choose $k$ large enough such that
$\|x_d^*(0|k-1)\|\leq \alpha \varphi^{-1}(c+\eps)$. Therefore, we obtain,

\begin{align*}
V^*_k-\tilde{V}_k&\geq \|\delta_k^*\|_S^2-\|\tilde{\delta}_k\|_S^2-C_3\alpha^2(c+\eps)^2\\
&\geq \beta(c-\eps)^2-(1-\alpha)^2\beta (c+\eps)^2-C_3\alpha^2(c+\eps)^2
\end{align*}
for some positive constant $C_3$ (see the Appendix for an explicit expression) that depends on the parameters of the model other than $S$.
Now, let us take $\eps<c/3$ small enough and $\alpha= 3\eps/(c+\eps)$ such that the strategy given by $\Delta \tilde {u}(0|k)$ is feasible. Consider $\beta> 6C_3$.
We can check that in this case $V^*_k-\tilde{V}_k>0$, which gives us the desired contradiction.
\end{proof}
\noindent
{\it Proof of Theorem} \ref{Maintheo}\\
From Proposition \ref{PropApprox} and Lemma \ref{lemma3}, we deduce that $\lim_{k\to \infty} \|\delta_k^*\|_S=0$. Indeed, observe that
\begin{align*}
 \|\delta_k^*\|_S&\leq \|D_0[u^*(0|k-1)-P_ru^*(0|k-1)]\|_S+\Big\|D_0\sum_{j=0}^{m-1}\Delta u^*(j|k)\Big\|_S\\
&= \sqrt{\beta}\| u^*(0|k-1)-P_r u^*(0|k-1)\|+ \sqrt{\beta}\Big\|\Big(\sum_{j=0}^{m-1}\Delta u^*(j|k)\Big)_\perp\Big\|\\
&\to 0+0
\end{align*}
as $k\to \infty$.

Finally, we will show that $\lim_{k\to \infty}V_k^*=0$. To this end, we first observe that using (\ref{Cost}), since $\lim_{k\to \infty} \|\delta_k^*\|_S=0$,
if $\limsup_{k\to \infty}\sum_{j=0}^{m-1}\|\Delta u^*(j|k)  \|_R^2=0$ then $\lim_{k\to \infty}V_k^*=0$. Thus, assume that $\lim_{k\to \infty}V_k^*>0$. This implies that $\limsup_{k\to \infty}\sum_{j=0}^{m-1}\|\Delta u^*(j|k)  \|_R^2=c^*>0$ and therefore there exists some subsequence $(k_n)_{n\geq 1}$ such that for all $n\geq 1$, $\sum_{j=0}^{m-1}\|\Delta u^*(j|k) \|_R^2\geq c^*/2$. Now consider $\eps<c^*/2$. For $n$ large enough using the strategy
$\Delta \tilde{u}(j|k_n)=0$, for all $0\leq j<m$ and $\tilde{\delta}_{k_n}=\delta^*_{k_n}-D_0\sum_{j=0}^{m-1}\Delta u^*(j|k_n)$  (we can check that it is indeed feasible) we obtain by Lemmas \ref{lemma2} and \ref{lemma3}
\begin{align*}
\tilde{V}_{k_n}&=\|Fx_d^*(0|k_n-1) \|_G^2 + \|\tilde{\delta}_{k_n}\|_S^2\\
&\leq \|Fx_d^*(0|k_n-1) \|_G^2 +\Big(\Big\|D_0\sum_{j=0}^{m-1}\Delta u^*(j|k_n)\Big\|_S + \|\delta^*_{k_n}\|_S\Big)^2\\
&\leq \eps\\
&<c^*/2\\
&\leq V^*_{k_n}
\end{align*}
where $G$ is from (\ref{DefG}). This gives us the desired contradiction.
\qed

\medskip\medskip

\noindent
{\it Proof of Theorem} \ref{Maintheo2}\\
Recall that the system is assumed to be initially in the steady state $u(0)={\bf 0}$, $x_s(0)={\bf 0}$, $x_d(0)={\bf 0}$, and consider the following feasible strategy for the control optimization problem at time step $k=1$,
\begin{align*}
	\Delta \tilde{u}(j|1) &= 0, \text{ for } j=0,1,\dots,m-1, \\
	\tilde{\delta}_{1} &= -r,
\end{align*}
which has an associated cost $\tilde{V}_1=\|r\|_S^2$. Since the above strategy is not necessarily the optimal one, and using Lemma \ref{lemma1}, we have that $V_k^* \leq V_1^* \leq \tilde{V}_1$ for all $k\geq 1$. But, for any $k\geq 1$, 
\begin{align*}
\|e^*(0|k)-\delta^*_k\|_Q^2 + \|\delta^*_k\|_S^2 \leq V_k^*
\end{align*}
and, on the other hand, by the parallelogram law,
\begin{align*}
\|e^*(0|k)\|_Q^2 \leq 2\big(\|e^*(0|k)-\delta^*_k\|_Q^2 + \|\delta^*_k\|_Q^2\big) \leq C_1\big(\|e^*(0|k)-\delta^*_k\|_Q^2 + \|\delta^*_k\|_S^2\big) 
\end{align*}
for some positive constant $C_1$. Thus we have that $\|e^*(0|k)\| \leq C_2\|r\|$ for some positive constant $C_2$, for all $k\geq 1$. This implies that, for any $\eta>0$, there exists $\varepsilon>0$ such that $\|e^*(0|k)\| \leq \eta$ for all $k\geq 1$, if $\|r\| \leq \varepsilon$. 
\qed

\section{Extended Infinite Horizon MPC with zone control}\label{Sec3}
\subsection{Formulation and results}
In this section, we consider the MPC with zone control and input targets (González and Odloak, 2009). This controller uses the OPOM model given by (\ref{model1}) and (\ref{model2}) for prediction. The input target $u_{des}\in\R^{n_u}$ is sent at each time step by the Real Time  Optimization (RTO) layer to the MPC layer. Unlike Odloak (2004), in this MPC the output set-point $y_{sp}\in\R^{n_y}$ is a variable of the optimization problem and is calculated at each time step. Here, the exact values of the output set-point are not important, as long as they remain inside a range with specified limits. As in Section \ref{Sec2}, for $j=0,1, \dots, m-1$, we define $\Delta u(j|k)$ as the $j$-th move of the input solution of the optimization problem (to be defined below) at time step $k$. For the other variables of the model, we will use a similar notation.

The MPC with zone control is based on the following cost function: for $k\geq 1$,
\begin{align*}
	V_k := \sum_{j=0}^{\infty} \big[y(j|k)-y_{sp,k}-\delta_{y,k}\big]^{T}Q_y\big[y(j|k)-y_{sp,k}-\delta_{y,k}\big] \\
	+ \sum_{j=0}^{\infty} \big[u(j|k)-u_{des}-\delta_{u,k}\big]^{T}Q_u\big[u(j|k)-u_{des}-\delta_{u,k}\big] \\
	+ \sum_{j=0}^{m-1} \Delta u(j|k)^{T}R\Delta u(j|k) + \delta_{y,k}^TS_y\delta_{y,k} + \delta_{u,k}^TS_u\delta_{u,k},
\end{align*}
where $\delta_{y,k}\in\R^{n_y}$ and $\delta_{u,k}\in\R^{n_u}$ are \textit{slack} vectors, and $Q_y\in \R^{n_y\times n_y}$, $Q_u\in \R^{n_u\times n_u}$, $R\in \R^{n_u\times n_u}$, $S_y\in \R^{n_y\times n_y}$ and $S_u\in \R^{n_u\times n_u}$ are positive definite weighting matrices. To prevent the cost from being unbounded, as discussed in González and Odloak (2009), we impose the terminal constraints
\begin{align}
	x_s(m-1|k) - y_{sp,k} -\delta_{y,k} = 0, ~~k\geq 1,
	\label{constx}
\end{align}
and
\begin{align}
	u(m-1|k) - u_{des} - \delta_{u,k} = 0, ~~k\geq 1.
	\label{constu}
\end{align}

In this section we also work under Assumption \ref{Assump1}, which for convenience we rename as

\begin{assump}
	The system is stable, that is, the spectral radius of the matrix $F$ is strictly smaller than $1$.
	\label{Assump4}
\end{assump}

Under Assumption \ref{Assump4} and constraints (\ref{constx}) and (\ref{constu}) we have that, for $k\geq 1$,
\begin{align*}
	V_k = \sum_{j=0}^{m-1} \big[y(j|k)-y_{sp,k}-\delta_{y,k}\big]^{T}Q_y\big[y(j|k)-y_{sp,k}-\delta_{y,k}\big] + x_d(m-1|k)^{T}\bar{Q}x_d(m-1|k) \\
	+ \sum_{j=0}^{m-1} \big[u(j|k)-u_{des}-\delta_{u,k}\big]^{T}Q_u\big[u(j|k)-u_{des}-\delta_{u,k}\big] + \sum_{j=0}^{m-1} \Delta u(j|k)^{T}R\Delta u(j|k) \\ 
	+ \,\delta_{y,k}^TS_y\delta_{y,k} + \delta_{u,k}^TS_u\delta_{u,k}
\end{align*}
or, in other words,
\begin{align*}
	V_k = \sum_{j=0}^{m-1} \big\|y(j|k)-y_{sp,k}-\delta_{y,k}\big\|_{Q_y}^2 + \big\|x_d(m-1|k)\big\|_{\bar{Q}}^2 + \sum_{j=0}^{m-1} \big\|u(j|k)-u_{des}-\delta_{u,k}\big\|_{Q_u}^2 \\
	+ \sum_{j=0}^{m-1} \big\|\Delta u(j|k)\big\|_R^2 + \big\|\delta_{y,k}\big\|_{S_y}^2 + \big\|\delta_{u,k}\big\|_{S_u}^2
\end{align*}
where
\begin{align*}
	\bar{Q} := \sum_{j=1}^{\infty} (F^j)^T\Psi^T Q_y \Psi F^j 
\end{align*}
is positive semidefinite and satisfies $\bar{Q} - F^{T}\bar{Q}F = F^{T}\Psi^{T}Q_y\Psi F$.

The control optimization problem of the MPC with zone control and input targets is the following: for all $k\geq 1$, 
\begin{align*}
	\min_{\Delta u_k, y_{sp,k}, \delta_{y,k}, \delta_{u,k}} V_k
\end{align*}
subject to (\ref{constx}), (\ref{constu}) and
\begin{align*}
	u(k-1) + \sum_{j=0}^{i} \Delta u(j|k) \in \mathbb{U}, \text{ for } 0\leq i < m,
\end{align*}
\begin{align*}
	\Delta u(j|k) \in \Delta\mathbb{U}, \text{ for } 0\leq j < m,
\end{align*}
\begin{align*}
	y_{sp,k} \in \mathbb{Y},
\end{align*}
where $\mathbb{U}$ and $\Delta\mathbb{U}$ are fixed rectangles in $\R^{n_u}$ and $\mathbb{Y}$ is a fixed rectangle in $\R^{n_y}$, all of them containing the corresponding origin, and
$$ \Delta u_k = \big[\Delta u(0|k)^T ~ \Delta u(1|k)^T ~ \cdots ~ \Delta u(m-1|k)^T\big]^T \in \R^{mn_u}. $$
Here we assume that, at time $0$, the system is in the steady state $u(0)=\textbf{0}$, $x_s(0)=\textbf{0}$, $x_d(0)=\textbf{0}$. Since $\mathbb{U}$, $\Delta\mathbb{U}$ and $\mathbb{Y}$ are convex sets and $R$ and $S_y$ are positive definite, the solution of the above optimization problem is unique. Let $\Delta u_k^*$, $y_{sp,k}^*$, $\delta_{y,k}^*$, $\delta_{u,k}^*$ be the solution of the control optimization problem at time step $k$ and let $V_k^*$ be the corresponding cost,
\begin{align*}
	V_k^* = \sum_{j=0}^{m-1} \big\|y^*(j|k)-y_{sp,k}^*-\delta_{y,k}^*\big\|_{Q_y}^2 + \big\|x_d^*(m-1|k)\big\|_{\bar{Q}}^2 + \sum_{j=0}^{m-1} \big\|u^*(j|k)-u_{des}-\delta_{u,k}^*\big\|_{Q_u}^2 \\
	+ \sum_{j=0}^{m-1} \big\|\Delta u^*(j|k)\big\|_R^2 + \big\|\delta_{y,k}^*\big\|_{S_y}^2 + \big\|\delta_{u,k}^*\big\|_{S_u}^2,
\end{align*}
where $u^*$ is obtained from $\Delta u_k^*$, and $y^*$, $x_s^*$ and $x_d^*$ are obtained from $\Delta u_k^*$, (\ref{model1}) and (\ref{model2}).

In the following, we also need the

\begin{assump}
	The input target $u_{des}$ is such that $u_{des}\in\mathbb{U}$ and $D_0u_{des} \in \mathbb{Y}$. 
	\label{Assump5}
\end{assump}

We finally state our results about convergence and stability of this second controller.

\begin{thm}[Convergence]
\label{Maintheo3}
	Under Assumptions \ref{Assump4} and \ref{Assump5}, we can choose $S_u$ large enough such that
	$$ \displaystyle\lim_{k\to\infty}V_k^*=0. $$
\end{thm}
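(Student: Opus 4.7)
The plan is to mirror the three-step scheme of Section~\ref{Proofs}, adapted to the two-slack zone-control setting with $S_u=\beta I_{n_u}$ and $\beta$ chosen large at the end.

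First, one verifies the analogue of Lemma~\ref{lemma1} using the shifted strategy $\Delta\tilde u(j|k+1)=\Delta u^*(j+1|k)$ for $0\le j\le m-2$, $\Delta\tilde u(m-1|k+1)=0$, $\tilde y_{sp,k+1}=y_{sp,k}^*$, $\tilde\delta_{y,k+1}=\delta_{y,k}^*$, $\tilde\delta_{u,k+1}=\delta_{u,k}^*$. The two terminal constraints (\ref{constx})--(\ref{constu}) at $k+1$ follow from those at $k$ (using in particular $u^*(m-1|k)=u_{des}+\delta_{u,k}^*$), and the identity $\bar Q-F^T\bar Q F=F^T\Psi^T Q_y\Psi F$ collapses the tail to give
\[ \tilde V_{k+1}-V_k^*=-\|y^*(0|k)-y_{sp,k}^*-\delta_{y,k}^*\|_{Q_y}^2-\|u^*(0|k)-u_{des}-\delta_{u,k}^*\|_{Q_u}^2-\|\Delta u^*(0|k)\|_R^2\le 0. \]
Hence $(V_k^*)$ is non-increasing and converges, the three summands tend to $0$, the proof of Lemma~\ref{lemma2} transfers verbatim to give $x_d^*(0|k)\to 0$, and combining $\Delta u^*(0|k)\to 0$ with $u^*(0|k)-u_{des}-\delta_{u,k}^*\to 0$ yields $\sum_{j=0}^{m-1}\Delta u^*(j|k)\to 0$, so $u^*(m-1|k)-u^*(k-1)\to 0$.

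The main new difficulty is showing $\|\delta_{u,k}^*\|\to 0$ via large $S_u$; this mirrors Proposition~\ref{PropApprox} but is conceptually simpler since $u_{des}$ is a single point rather than an affine fibre, so no orthogonal-projection machinery is needed. Suppose for contradiction $\limsup_k\|\delta_{u,k}^*\|=c>0$; extract a subsequence $(k_n)$ with $\|u^*(k_n-1)-u_{des}\|\ge c-\eps$ and $\|u^*(k-1)-u_{des}\|\le c+\eps$ for all $k$ large, and at these times consider the competitor
\[ \Delta\tilde u(0|k_n)=\alpha\bigl(u_{des}-u^*(k_n-1)\bigr),\ \Delta\tilde u(j|k_n)=0\ (j\ge 1),\ \tilde y_{sp,k_n}=\mathrm{argmin}_{y\in\mathbb Y}\|\tilde x_s(m-1|k_n)-y\|_{S_y}^2, \]
with $\tilde\delta_{u,k_n}$, $\tilde\delta_{y,k_n}$ read off from (\ref{constu})--(\ref{constx}); convexity of $\mathbb U$ makes this feasible for $\alpha$ small. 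A direct computation gives $\tilde\delta_{u,k_n}=(1-\alpha)(u^*(k_n-1)-u_{des})$; from the convex-combination identity $\tilde x_s(m-1|k_n)=(1-\alpha)D_0u^*(k_n-1)+\alpha D_0u_{des}$ together with $D_0u_{des}\in\mathbb Y$ (Assumption~\ref{Assump5}) and $y_{sp,k_n}^*\in\mathbb Y$ one obtains $\|\tilde\delta_{y,k_n}\|_{S_y}\le(1-\alpha)(\|\delta_{y,k_n}^*\|_{S_y}+o_n(1))$ via the distance-to-$\mathbb Y$ inequality. Setting $\alpha=3\eps/(c+\eps)$, the $S_u$-gain is at least $\beta\eps(2c-3\eps)$, the $Q_u$- and $S_y$-gains are non-negative up to $o_n(1)$ corrections, and the $Q_y$, $\bar Q$, $R$ and cross-term contributions are bounded in absolute value by $C\eps^2+o_n(1)$ (via Cauchy--Schwarz, Lemma~\ref{lemma2} and the contractivity of $F$) with $C$ depending only on the parameters of the model other than $S_u$; taking $\eps<c/3$ and $\beta$, $n$ large enough makes $V_{k_n}^*-\tilde V_{k_n}>0$, contradicting optimality.

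Once $\|\delta_{u,k}^*\|\to 0$, so $u^*(k-1)\to u_{des}$, the conclusion $V_k^*\to 0$ follows as in the final paragraph of the proof of Theorem~\ref{Maintheo}: if $\limsup V_k^*=c^*>0$, along a subsequence with $V_{k_n}^*\ge c^*/2$ compare with the null-move feasible strategy $\Delta\tilde u(\cdot|k_n)=0$, $\tilde y_{sp,k_n}=D_0u_{des}$, $\tilde\delta_{u,k_n}=u^*(k_n-1)-u_{des}$, $\tilde\delta_{y,k_n}=D_0(u^*(k_n-1)-u_{des})$; its cost vanishes (the $Q_y$- and $\bar Q$-terms collapse to $\Psi F^{j+1}x_d^*(0|k_n-1)$ and $F^m x_d^*(0|k_n-1)$, hence go to $0$ by Lemma~\ref{lemma2}; the $Q_u$- and $R$-terms are identically zero; the slack norms vanish with $u^*(k_n-1)\to u_{des}$), contradicting $V_{k_n}^*\le\tilde V_{k_n}$. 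The main obstacle is the $S_y$-slack bookkeeping in the second step: handling the $y$-slack cleanly requires the $S_y$-optimized choice of $\tilde y_{sp,k_n}$ together with the convex-combination distance inequality (to avoid an $O(\alpha)$ loss that would break the argument if $\tilde y_{sp,k_n}$ were simply set to $y_{sp,k_n}^*$), and then tracking the $O(\eps^2)+o(1)$ bounds on the remaining $Q_y$, $\bar Q$, $R$ and cross-term contributions uniformly in $\beta$.
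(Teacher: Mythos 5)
Your proposal is correct, but its central step --- forcing $\delta_{u,k}^*\to 0$ by taking $S_u$ large --- follows a genuinely different route from the paper's. The paper does not isolate $\delta_{u,k}^*$ first: it proves in Lemma \ref{lemma6} that $\lim_k V_k^*=\lim_k(\|\delta_{y,k}^*\|_{S_y}^2+\|\delta_{u,k}^*\|_{S_u}^2)$ via an ``all moves at once'' competitor, and then reaches the contradiction with a competitor that keeps the first $m-1$ optimal moves, perturbs only the last move by $-(1-\alpha)\delta_{u,k}^*$, sets $\tilde y_{sp,k}=\alpha y_{sp,k}^*+(1-\alpha)D_0u_{des}$ and scales \emph{both} slacks by $\alpha$ exactly, exploiting the identity $D_0(u_{des}+\delta_{u,k}^*)=y_{sp,k}^*+\delta_{y,k}^*$; the argument then splits into the cases $\limsup_k\|\delta_{u,k}^*\|>0$ and $=0$, the second being killed by the $-(1+\alpha)\|\delta_{y,k}^*\|_{S_y}^2$ term. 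You instead transplant the scheme of Proposition \ref{PropApprox}: a single move $\alpha(u_{des}-u^*(k_n-1))$ followed by null moves, the set-point chosen as the $S_y$-projection of the new terminal static state onto $\mathbb{Y}$, the $\eps$--$\alpha$ bookkeeping, and then a separate null-move argument to dispose of everything else. Both work. The paper's version buys an explicit, implementable threshold $S_u>H+I_{n_u}$ with $H$ written in model data and needs no convex-distance estimate for $\mathbb{Y}$; yours is more uniform with Section \ref{Sec2}, dispenses with Lemma \ref{lemma6} and the case split, but only yields ``$S_u$ large enough'' with an implicit constant. Two points to tighten: (i) the stated $S_u$-gain $\beta\eps(2c-3\eps)$ ignores the $o_n(1)$ discrepancy between $\|\delta_{u,k_n}^*\|=\|u^*(m-1|k_n)-u_{des}\|$ and $\|u^*(k_n-1)-u_{des}\|$; this is harmless only because $\eps$ and $\beta$ are fixed before $n\to\infty$, so the $\beta\,o_n(1)$ error is eventually dominated --- make the order of quantifiers explicit; (ii) feasibility of $\Delta\tilde u(0|k_n)\in\Delta\mathbb{U}$ for small $\alpha$ implicitly requires $0\in\mathrm{int}\,\Delta\mathbb{U}$, the same tacit assumption the paper makes in (\ref{largek}) and in Proposition \ref{PropApprox}, so it is not a gap relative to the paper but deserves a word.
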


\begin{thm}[Stability]
\label{Maintheo4}
	Under Assumptions \ref{Assump4} and \ref{Assump5}, the controller is stable, that is, for any $\eta>0$, there exists $\varepsilon>0$ such that $\|u_{des}\| \leq \varepsilon$ implies that 
	$$ \bigg\|\begin{bmatrix} y^*(0|k) - y^*_{sp,k} \\ u^*(0|k) - u_{des} \end{bmatrix}\bigg\| \leq \eta \,\text{ for all }\, k\geq 1. $$
\end{thm}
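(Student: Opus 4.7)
The plan is to mirror the proof of Theorem \ref{Maintheo2}. The key ingredient I need is a monotonicity statement analogous to Lemma \ref{lemma1}: the sequence $(V_k^*)_{k\geq 1}$ of optimal costs for the zone control problem is non-increasing. This should follow by the same shifting argument, carried out at time step $k+1$ with the shifted input moves $\Delta\tilde{u}(j|k+1)=\Delta u^*(j+1|k)$ for $0\le j\le m-2$ and $\Delta\tilde{u}(m-1|k+1)=0$, together with $\tilde{y}_{sp,k+1}=y^*_{sp,k}$, $\tilde{\delta}_{y,k+1}=\delta^*_{y,k}$ and $\tilde{\delta}_{u,k+1}=\delta^*_{u,k}$. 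Feasibility (including $\tilde{y}_{sp,k+1}\in\mathbb{Y}$ and the two terminal constraints (\ref{constx}) and (\ref{constu})) is checked exactly as in Lemma \ref{lemma1}, and the usual telescoping, combined with the Lyapunov identity $\bar{Q}-F^T\bar{Q}F=F^T\Psi^TQ_y\Psi F$, yields $\tilde{V}_{k+1}-V_k^*\le 0$.

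Next, I use the initial steady state $u(0)=\mathbf{0}$, $x_s(0)=\mathbf{0}$, $x_d(0)=\mathbf{0}$ to construct a cheap feasible strategy at $k=1$. Namely, take
\[
\Delta\tilde{u}(j|1)=\mathbf{0}\text{ for }0\le j\le m-1,\qquad \tilde{y}_{sp,1}=\mathbf{0},\qquad \tilde{\delta}_{y,1}=\mathbf{0},\qquad \tilde{\delta}_{u,1}=-u_{des}.
\]
This is feasible since $\mathbf{0}\in\mathbb{U}\cap\Delta\mathbb{U}\cap\mathbb{Y}$ by Assumption \ref{Assump5} and both terminal constraints reduce to $0=0$. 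Its cost is $\tilde{V}_1=\|u_{des}\|_{S_u}^2$, and the monotonicity from the first step then gives $V_k^*\le V_1^*\le\|u_{des}\|_{S_u}^2$ for every $k\ge 1$.

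To extract the two norms of interest from $V_k^*$, I use that $Q_y,S_y,Q_u,S_u$ are positive definite on the appropriate spaces. By the parallelogram law, as in the proof of Theorem \ref{Maintheo2}, there exist positive constants $C_1,C_2$ with
\begin{align*}
\|y^*(0|k)-y^*_{sp,k}\|^2 &\le C_1\bigl(\|y^*(0|k)-y^*_{sp,k}-\delta^*_{y,k}\|_{Q_y}^2+\|\delta^*_{y,k}\|_{S_y}^2\bigr),\\
\|u^*(0|k)-u_{des}\|^2 &\le C_2\bigl(\|u^*(0|k)-u_{des}-\delta^*_{u,k}\|_{Q_u}^2+\|\delta^*_{u,k}\|_{S_u}^2\bigr).
\end{align*}
Each of the four summands on the right appears (as a $j=0$ term in the respective sum, or as a full term) in the expression of $V_k^*$, so each is bounded above by $V_k^*$. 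Adding the two inequalities and using $V_k^*\le\|u_{des}\|_{S_u}^2$ yields $\|y^*(0|k)-y^*_{sp,k}\|^2+\|u^*(0|k)-u_{des}\|^2\le C\,\|u_{des}\|^2$ for some constant $C>0$ independent of $k$. Since the left-hand side equals the square of the Euclidean norm of the block vector appearing in the theorem, choosing $\varepsilon=\eta/\sqrt{C}$ gives the claim.

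The hard part is really the monotonicity step: the zone control cost involves more decision variables (the set-point $y_{sp,k}$ plus two slacks) and two terminal constraints instead of one, so I need to check carefully that the shift preserves feasibility and that the extra quadratic terms in $V_k$ telescope as cleanly as they do in Lemma \ref{lemma1}. Once that is settled, the rest of the proof is a routine adaptation of the argument for Theorem \ref{Maintheo2}.
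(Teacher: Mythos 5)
Your proposal is correct and follows essentially the same route as the paper: the monotonicity you outline is exactly Lemma \ref{lemma4}, and the remainder (a cheap feasible strategy at $k=1$ bounding $V_1^*$, then the parallelogram-law estimates) matches the paper's proof of Theorem \ref{Maintheo4}. The only difference is your choice $\tilde{y}_{sp,1}=\mathbf{0}$, $\tilde{\delta}_{y,1}=\mathbf{0}$ instead of the paper's $\tilde{y}_{sp,1}=D_0u_{des}$, $\tilde{\delta}_{y,1}=-D_0u_{des}$, which is equally feasible (it relies on the origin lying in $\mathbb{Y}$ rather than on Assumption \ref{Assump5}) and gives the marginally tighter bound $\tilde{V}_1=\|u_{des}\|_{S_u}^2$.
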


\subsection{Proofs of Theorems \ref{Maintheo3} and \ref{Maintheo4}}

We start with several technical results that will be useful to prove Theorems \ref{Maintheo3} and \ref{Maintheo4}. The following lemma was established in González and Odloak (2009), but for the sake of completeness we present its proof.

\begin{lem}
	Under Assumption \ref{Assump4}, the sequence $(V_k^*)_{k\geq 1}$ is non-increasing.
	\label{lemma4}
\end{lem}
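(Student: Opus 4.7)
The plan is to adapt the proof of Lemma \ref{lemma1} to the richer cost function by constructing a feasible shifted strategy at time step $k+1$ from the optimal solution at time step $k$ and showing that its cost does not exceed $V_k^*$. The natural candidate is
\begin{align*}
\Delta \tilde{u}(j|k+1) &= \Delta u^*(j+1|k),\quad j=0,\dots,m-2, \\
\Delta \tilde{u}(m-1|k+1) &= 0, \\
\tilde{y}_{sp,k+1} &= y_{sp,k}^*, \quad \tilde{\delta}_{y,k+1} = \delta_{y,k}^*, \quad \tilde{\delta}_{u,k+1} = \delta_{u,k}^*.
\end{align*}

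I would first verify feasibility. The input-box constraints $u^*(k)+\sum_{j=0}^i \Delta \tilde{u}(j|k+1)\in \mathbb{U}$ and $\Delta \tilde{u}(j|k+1)\in \Delta\mathbb{U}$ follow by a telescoping argument identical to the one in Lemma \ref{lemma1}, and $\tilde{y}_{sp,k+1}=y_{sp,k}^*\in \mathbb{Y}$ trivially. The two terminal constraints (\ref{constx}) and (\ref{constu}) reduce respectively to $\tilde{x}_s(m-1|k+1)=x_s^*(m-1|k)$ and $\tilde{u}(m-1|k+1)=u^*(m-1|k)$; both are obtained by unrolling the respective recursions under the index shift, using $\Delta \tilde{u}(m-1|k+1)=0$ to fill the gap, and then invoking (\ref{constx}) and (\ref{constu}) at time $k$.

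Next I would compute $\tilde{V}_{k+1}-V_k^*$. Under the shift, $\tilde{y}(j|k+1)=y^*(j+1|k)$ and $\tilde{u}(j|k+1)=u^*(j+1|k)$ for $j=0,\dots,m-2$, while $\tilde{x}_d(m-1|k+1)=Fx_d^*(m-1|k)$. The three running sums re-index cleanly, so all but the $j=m-1$ boundary contribution of each sum and the $j=0$ contribution of $V_k^*$ cancel, together with the loss $-\|\Delta u^*(0|k)\|_R^2$ in the $R$-sum. The $u$-boundary contribution vanishes by (\ref{constu}) at time $k$, and the $y$-boundary contribution combines with the $\bar{Q}$-difference exactly as in Lemma \ref{lemma1}, using (\ref{constx}) and the Lyapunov identity $\bar{Q}-F^T\bar{Q}F=F^T\Psi^TQ_y\Psi F$. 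This should yield
\begin{equation*}
\tilde{V}_{k+1}-V_k^* = -\|y^*(0|k)-y_{sp,k}^*-\delta_{y,k}^*\|_{Q_y}^2 -\|u^*(0|k)-u_{des}-\delta_{u,k}^*\|_{Q_u}^2 -\|\Delta u^*(0|k)\|_R^2 \leq 0,
\end{equation*}
and since $(\Delta\tilde{u}_{k+1},\tilde{y}_{sp,k+1},\tilde{\delta}_{y,k+1},\tilde{\delta}_{u,k+1})$ is feasible but not necessarily optimal at time step $k+1$, $V_{k+1}^*\leq \tilde{V}_{k+1}\leq V_k^*$.

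The main obstacle is not conceptual but combinatorial: keeping the index arithmetic of the three running sums consistent and verifying that the new terminal constraint (\ref{constu}) for $u$ behaves under the shift exactly like the one for $x_s$. Beyond this bookkeeping, the argument is a direct transcription of Lemma \ref{lemma1}.
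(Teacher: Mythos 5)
Your proposal is correct and follows essentially the same route as the paper: the same shifted strategy with $\Delta\tilde{u}(m-1|k+1)=0$ and carried-over $y_{sp}$ and slacks, the same feasibility check, the vanishing of the $u$-boundary term via (\ref{constu}) at time $k$, and the absorption of the $y$-boundary term into the $\bar{Q}$-difference via the Lyapunov identity. The resulting expression for $\tilde{V}_{k+1}-V_k^*$ matches the paper's exactly.
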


\begin{proof}
	Let $\Delta \tilde{u}_{k+1}$, $\tilde{y}_{sp,k+1}$, $\tilde{\delta}_{y,k+1}$, $\tilde{\delta}_{u,k+1}$ be the following strategy for the control optimization problem at time step $k+1$:
	\begin{align*}
		\Delta \tilde{u}(j|k+1) &= \Delta u^*(j+1|k), \text{ for } j=0,1,\dots,m-2, \\
		\Delta \tilde{u}(m-1|k+1) &= 0, \\
		\tilde{y}_{sp,k+1} &= y_{sp,k}^*, \\
		\tilde{\delta}_{y,k+1} &= \delta_{y,k}^*, \\
		\tilde{\delta}_{u,k+1} &= \delta_{u,k}^*.
	\end{align*}
	 We can check that the above strategy is feasible for the control optimization problem at time step $k+1$, and its cost is
	\begin{align*}
		\tilde{V}_{k+1} = \sum_{j=0}^{m-1} \big\|\tilde{y}(j|k+1)-y_{sp,k}^*-\delta_{y,k}^*\big\|_{Q_y}^2 + \big\|\tilde{x}_d(m-1|k+1)\big\|_{\bar{Q}}^2 + \sum_{j=0}^{m-1} \big\|\tilde{u}(j|k+1)-u_{des}-\delta_{u,k}^*\big\|_{Q_u}^2 \\
		+ \sum_{j=1}^{m-1} \big\|\Delta u^*(j|k)\big\|_R^2 + \big\|\delta_{y,k}^*\big\|_{S_y}^2 + \big\|\delta_{u,k}^*\big\|_{S_u}^2.
	\end{align*}
	
	Now, since $\tilde{y}(j|k+1) = y^*(j+1|k)$ and $\tilde{u}(j|k+1) = u^*(j+1|k)$ for $0\leq j \leq m-2$, we have that 
	\begin{align*}
		\tilde{V}_{k+1} - V_k^* = \big\|\tilde{y}(m-1|k+1)-y_{sp,k}^*-\delta_{y,k}^*\big\|_{Q_y}^2 - \big\|y^*(0|k)-y_{sp,k}^*-\delta_{y,k}^*\big\|_{Q_y}^2 \\
		+ \big\|\tilde{u}(m-1|k+1)-u_{des}-\delta_{u,k}^*\big\|_{Q_u}^2 - \big\|u^*(0|k)-u_{des}-\delta_{u,k}^*\big\|_{Q_u}^2 \\
		+ \big\|\tilde{x}_d(m-1|k+1)\big\|_{\bar{Q}}^2 - \big\|x_d^*(m-1|k)\big\|_{\bar{Q}}^2 - \big\|\Delta u^*(0|k)\big\|_R^2.
	\end{align*}
	But observe that $\tilde{x}_s(m-1|k+1) = x_s^*(m-1|k)$ and then
	\begin{align*}
		\tilde{y}(m-1|k+1)-y_{sp,k}^*-\delta_{y,k}^* = \Psi\tilde{x}_d(m-1|k+1).
	\end{align*}
	On the other hand, we have that $\tilde{x}_d(m-1|k+1) =  F x_d^*(m-1|k)$, so that
	\begin{align*}
		\big\|\tilde{y}(m-1|k+1)-y_{sp,k}^*-\delta_{y,k}^*\big\|_{Q_y}^2 + \big\|\tilde{x}_d(m-1|k+1)\big\|_{\bar{Q}}^2
		= \big\|x_d^*(m-1|k)\big\|_{\bar{Q}}^2.
	\end{align*}
	Moreover, note that $\tilde{u}(m-1|k+1) = u^*(m-1|k)$ and then $\tilde{u}(m-1|k+1) - u_{des} - \delta_{u,k}^* =  0$. Thus, we deduce that
	\begin{align*}
		\tilde{V}_{k+1} - V_k^* = -\big\|y^*(0|k)-y_{sp,k}^*-\delta_{y,k}^*\big\|_{Q_y}^2 - \big\|u^*(0|k)-u_{des}-\delta_{u,k}^*\big\|_{Q_u}^2 - \big\|\Delta u^*(0|k)\big\|_R^2 \leq 0.
	\end{align*}
	
	Finally, since the strategy $\Delta \tilde{u}_{k+1}$, $\tilde{y}_{sp,k+1}$, $\tilde{\delta}_{y,k+1}$, $\tilde{\delta}_{u,k+1}$ at time step $k+1$ is not necessarily the optimal one, we have that $V_{k+1}^* \leq \tilde{V}_{k+1}$ and then we conclude that the sequence $(V_k^*)_{k\geq 1}$ is non-increasing.
\end{proof}

\begin{lem}
	Under Assumption \ref{Assump4}, we have that
	\begin{equation*}
		\sum_{j=0}^{m-1} \Delta u^*(j|k) \to 0 \,\text{ as } k\to\infty ~~~\text{ and }~~~ x_d^*(0|k) \to 0 \,\text{ as } k\to\infty. 
	\end{equation*}
	\label{lemma5}
\end{lem}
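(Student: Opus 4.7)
The plan is to follow the two-step template established in the proofs of Lemmas \ref{lemma1}--\ref{lemma2} of Section \ref{Sec2}. First, by Lemma \ref{lemma4} the sequence $(V_k^*)_{k\geq 1}$ is non-increasing and bounded below by zero, hence convergent. Reading the sharper inequality
\begin{align*}
V_{k+1}^* - V_k^* \leq -\big\|y^*(0|k)-y_{sp,k}^*-\delta_{y,k}^*\big\|_{Q_y}^2 - \big\|u^*(0|k)-u_{des}-\delta_{u,k}^*\big\|_{Q_u}^2 - \big\|\Delta u^*(0|k)\big\|_R^2
\end{align*}
derived inside the proof of Lemma \ref{lemma4}, and combining it with the convergence of $(V_k^*)_{k\geq 1}$, I can force each of the three non-negative terms on the right to vanish as $k\to\infty$. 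Positive definiteness of $Q_u$ and $R$ then gives
\begin{align*}
u^*(0|k) - u_{des} - \delta_{u,k}^* \xrightarrow{k\to\infty} 0 \qquad \text{and} \qquad \Delta u^*(0|k) \xrightarrow{k\to\infty} 0.
\end{align*}

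For the convergence of the cumulative move $\sum_{j=0}^{m-1}\Delta u^*(j|k)$, I would exploit the terminal input constraint (\ref{constu}), which reads $u^*(m-1|k)=u_{des}+\delta_{u,k}^*$, together with the telescoping identity $u^*(m-1|k)=u^*(k-1)+\sum_{j=0}^{m-1}\Delta u^*(j|k)$ and the fact that $u^*(k-1)=u^*(0|k)-\Delta u^*(0|k)$, since the applied input at step $k$ coincides with the prediction $u^*(0|k)$. Rearranging these three relations yields
\begin{align*}
\sum_{j=0}^{m-1}\Delta u^*(j|k) \,=\, \Delta u^*(0|k) \,-\, \big(u^*(0|k)-u_{des}-\delta_{u,k}^*\big),
\end{align*}
which is the difference of two quantities already shown to tend to zero.

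For the convergence $x_d^*(0|k)\to 0$, I would replay verbatim the argument in the proof of Lemma \ref{lemma2}. Iterating the second block of the state equation (\ref{model1}) gives
\begin{align*}
x_d^*(0|k+p) = F^p x_d^*(0|k) + \sum_{j=0}^{p-1} F^j D_d \Delta u^*(0|k+p-j),
\end{align*}
and Assumption \ref{Assump4} guarantees $\sum_{j\geq 0}\|F^j\|<\infty$ together with $F^p\to 0$. Combining these with $\Delta u^*(0|k)\to 0$, a standard splitting into a ``tail'' contribution (controlled by choosing $k$ large enough that the $\Delta u^*(0|\cdot)$'s are uniformly small) and a ``head'' term (controlled by taking $p$ large so that $F^p x_d^*(0|k)$ is negligible) delivers the desired limit.

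The only genuinely new piece of work, and hence the potential main obstacle, is producing the algebraic identity $\sum_{j=0}^{m-1}\Delta u^*(j|k) = \Delta u^*(0|k) - (u^*(0|k)-u_{des}-\delta_{u,k}^*)$; once this identity is in hand, the proof reduces to invoking previously established convergences and recycling the tail argument of Lemma \ref{lemma2}.
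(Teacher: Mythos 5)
Your proposal is correct and follows essentially the same route as the paper: extract the decrease inequality from the proof of Lemma \ref{lemma4}, use convergence of $(V_k^*)_{k\geq 1}$ to kill the three non-negative terms, then use the terminal constraint (\ref{constu}) to write $\sum_{j=0}^{m-1}\Delta u^*(j|k) = \Delta u^*(0|k) - \big(u^*(0|k)-u_{des}-\delta_{u,k}^*\big)$ (the paper states the equivalent identity $u^*(0|k)-u_{des}-\delta_{u,k}^* = -\sum_{j=1}^{m-1}\Delta u^*(j|k)$), and finish with the same head/tail splitting as in Lemma \ref{lemma2} for $x_d^*(0|k)\to 0$. The identity you flag as the ``potential main obstacle'' is exactly the one-line consequence of (\ref{constu}) that the paper uses, so there is no gap.
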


\begin{proof}
	From the proof of the last lemma, we have that
	\begin{align*}
		V_{k+1}^* - V_k^* \leq - \big\|y^*(0|k)-y_{sp,k}^*-\delta_{y,k}^*\big\|_{Q_y}^2 - \big\|u^*(0|k)-u_{des}-\delta_{u,k}^*\big\|_{Q_u}^2 - \big\|\Delta u^*(0|k)\big\|_R^2.
	\end{align*}
	Since the sequence $(V_k^*)_{k\geq 1}$ is non-increasing and non-negative, we deduce that it converges and then
	\begin{align}
		\label{Dutozero}
		\big\|u^*(0|k)-u_{des}-\delta_{u,k}^*\big\|_{Q_u}^2 \xrightarrow{k\to\infty} 0 
		~~~\text{ and }~~~ \big\|\Delta u^*(0|k)\big\|_R^2 \xrightarrow{k\to\infty} 0.
	\end{align}
	Using (\ref{constu}), we have that
	\begin{align*}
		u^*(0|k)-u_{des}-\delta_{u,k}^* = -\sum_{j=1}^{m-1}\Delta u^*(j|k).
	\end{align*}
	Thus, by (\ref{Dutozero}), we obtain that
	\begin{align*}
		\sum_{j=0}^{m-1}\Delta u^*(j|k) \xrightarrow{k\to\infty} 0.
	\end{align*}

	Now, observe that
	\begin{align*}
		x_d^*(0|k+p) = F^{p} x_d^*(0|k) + \sum_{j=0}^{p-1} F^j D_d \Delta u^*(0|k+p-j), ~\text{ for } p\geq 1.
	\end{align*}
	Under Assumption \ref{Assump4}, for any $\varepsilon>0$, there exists $k_0\geq 1$ such that
	\begin{align*}
		\sup_{p\geq 1}\big\|x_d^*(0|k_0+p) - F^{p} x_d^*(0|k_0)\big\| < \varepsilon/2,
	\end{align*}
	and there exists $p_0\geq 1$ such that, for all $p\geq p_0$, $\|F^{p} x_d^*(0|k_0)\| < \varepsilon/2$.
	Finally, for all $p\geq p_0$,
	\begin{align*}
		\big\|x_d^*(0|k_0+p)\big\| \leq \big\|F^{p} x_d^*(0|k_0)\big\| + \sup_{p\geq 1}\big\|x_d^*(0|k_0+p) - F^{p} x_d^*(0|k_0)\big\| < \varepsilon,
	\end{align*}
	that is, $x_d^*(0|k) \to 0$ as $k\to\infty$.
\end{proof}

\begin{lem}
	Under Assumption \ref{Assump4}, we have that
	\begin{equation*}
		\lim_{k\to\infty}V_k^* = \lim_{k\to\infty} \big(\|\delta_{y,k}^*\|_{S_y}^2 + \|\delta_{u,k}^*\|_{S_u}^2\big). 
	\end{equation*}
	\label{lemma6}
\end{lem}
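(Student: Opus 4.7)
The plan is to show that the non-negative residual
\[ R_k := V_k^* - \|\delta^*_{y,k}\|_{S_y}^2 - \|\delta^*_{u,k}\|_{S_u}^2 \]
tends to zero. Since $(V_k^*)_{k\ge 1}$ converges by Lemma~\ref{lemma4} to some limit $V_\infty$, this will force $\|\delta^*_{y,k}\|_{S_y}^2+\|\delta^*_{u,k}\|_{S_u}^2\to V_\infty$ as well, which is exactly the claim. As preparation, the sign-definite bound on $V_{k+1}^*-V_k^*$ from the proof of Lemma~\ref{lemma4}, together with the convergence of $V_k^*$, yields (as in (\ref{Dutozero}) in the proof of Lemma~\ref{lemma5})
\[ \|y^*(0|k)-y^*_{sp,k}-\delta^*_{y,k}\|_{Q_y}\to 0,\quad \|u^*(0|k)-u_{des}-\delta^*_{u,k}\|_{Q_u}\to 0,\quad \Delta u^*(0|k)\to 0, \]
and Lemma~\ref{lemma5} provides $x_d(k)=x_d^*(0|k)\to 0$.

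The heart of the argument is to construct, at time step $k+1$, the feasible \emph{do-nothing} strategy
\[ \Delta\tilde{u}(j|k+1)=0 \text{ for } 0\le j<m,\quad \tilde{y}_{sp,k+1}=y^*_{sp,k},\quad \tilde{\delta}_{y,k+1}=x_s(k+1)-y^*_{sp,k},\quad \tilde{\delta}_{u,k+1}=u(k)-u_{des}. \]
Feasibility is routine: $u(k)\in\mathbb{U}$ because it was the input actually applied at time $k$, $0\in\Delta\mathbb{U}$, $y^*_{sp,k}\in\mathbb{Y}$ by feasibility of the optimum at time $k$, and the terminal constraints (\ref{constx})--(\ref{constu}) hold by construction. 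With $\Delta\tilde{u}\equiv 0$ one has $\tilde{u}(j|k+1)=u(k)$ (which kills the $Q_u$ sum), $\tilde{x}_s(j|k+1)=x_s(k+1)$, and $\tilde{x}_d(j|k+1)=F^jx_d(k+1)$, so that $\tilde{y}(j|k+1)-\tilde{y}_{sp,k+1}-\tilde{\delta}_{y,k+1}=\Psi F^jx_d(k+1)$. Therefore
\[ \tilde{V}_{k+1}=\sum_{j=0}^{m-1}\|\Psi F^jx_d(k+1)\|_{Q_y}^2+\|F^{m-1}x_d(k+1)\|_{\bar{Q}}^2+\|\tilde{\delta}_{y,k+1}\|_{S_y}^2+\|\tilde{\delta}_{u,k+1}\|_{S_u}^2, \]
and since $x_d(k+1)\to 0$, the first two terms are $o(1)$.

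The main technical step is to show that $\|\tilde{\delta}_{y,k+1}\|_{S_y}^2=\|\delta^*_{y,k}\|_{S_y}^2+o(1)$ and similarly for $\delta_u$. Using $x_s(k+1)=x_s(k)+D_0\Delta u^*(0|k)$ together with $x_s(k)=y^*(0|k)-\Psi x_d^*(0|k)$, one rewrites
\[ \tilde{\delta}_{y,k+1}=[y^*(0|k)-y^*_{sp,k}-\delta^*_{y,k}]+\delta^*_{y,k}-\Psi x_d^*(0|k)+D_0\Delta u^*(0|k)=\delta^*_{y,k}+o(1), \]
and directly $\tilde{\delta}_{u,k+1}=[u^*(0|k)-u_{des}-\delta^*_{u,k}]+\delta^*_{u,k}=\delta^*_{u,k}+o(1)$; the boundedness of $\delta^*_{y,k}$, $\delta^*_{u,k}$ (by $\sqrt{V_1^*}$) then yields the squared-norm approximations by expansion. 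Setting $a_k:=\|\delta^*_{y,k}\|_{S_y}^2+\|\delta^*_{u,k}\|_{S_u}^2$, the trivial bound $a_{k+1}\le V_{k+1}^*$ together with optimality $V_{k+1}^*\le\tilde{V}_{k+1}=a_k+o(1)$ gives the sandwich $a_{k+1}\le V_{k+1}^*\le a_k+o(1)$; taking $\limsup$ yields $\limsup a_k\le V_\infty$, and taking $\liminf$ in $V_{k+1}^*\le a_k+o(1)$ yields $V_\infty\le\liminf a_k$, so $a_k\to V_\infty$ and hence $R_k\to 0$. I expect the main obstacle to be precisely the choice of test strategy: the shift-based ansatz used in Lemma~\ref{lemma4} controls only the $j=0$ terms and cannot close the gap, whereas the do-nothing strategy is tailored to the preliminary convergences and exploits the asymptotic stationarity of the optimum.
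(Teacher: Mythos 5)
Your proof is correct, and it reaches the conclusion by a genuinely different test strategy than the paper. The paper works at time step $k$ and compresses all $m$ optimal moves into the single first move, $\Delta\tilde{u}(0|k)=\sum_{j=0}^{m-1}\Delta u^*(j|k)$, keeping $y_{sp,k}^*,\delta_{y,k}^*,\delta_{u,k}^*$ unchanged; this makes the candidate cost \emph{exactly} $\|\delta_{y,k}^*\|_{S_y}^2+\|\delta_{u,k}^*\|_{S_u}^2$ plus terms that vanish by Lemma \ref{lemma5}, at the price of needing $k$ large enough that the compressed move lies in $\Delta\mathbb{U}$. You instead work at time step $k+1$ with the zero-move strategy and re-fit the slacks to satisfy (\ref{constx})--(\ref{constu}); feasibility is then unconditional ($0\in\Delta\mathbb{U}$ always), but you must additionally show $\tilde{\delta}_{y,k+1}=\delta_{y,k}^*+o(1)$ and $\tilde{\delta}_{u,k+1}=\delta_{u,k}^*+o(1)$, which requires $\|y^*(0|k)-y_{sp,k}^*-\delta_{y,k}^*\|\to 0$ — this does follow from the telescoping bound in Lemma \ref{lemma4} even though (\ref{Dutozero}) only records two of the three vanishing terms — together with boundedness of the slacks, which you correctly get from $V_k^*\leq V_1^*$. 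The final sandwich $a_{k+1}\leq V_{k+1}^*\leq a_k+o(1)$ is a clean way to close. Two harmless blemishes: your identity $x_s(k+1)=x_s(k)+D_0\Delta u^*(0|k)$ has an off-by-one in the increment index (the quantity you actually need is $x_s^*(0|k)-y_{sp,k}^*$, and the spurious $D_0\Delta u^*(0|k)$ term is $o(1)$ anyway), and your indexing $\tilde{x}_d(j|k+1)=F^jx_d(k+1)$ versus the paper's $F^{j+1}x_d^*(0|k)$ is just a relabelling; neither affects validity since every discrepancy vanishes in the limit.
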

\begin{proof}
	By Lemma \ref{lemma5}, we can choose $k$ large enough so that
	\begin{align*}
		\sum_{j=0}^{m-1} \Delta u^*(j|k) \in \Delta\mathbb{U}.
	\end{align*}
	For such a $k$, let $\Delta \tilde{u}_{k}$, $\tilde{y}_{sp,k}$, $\tilde{\delta}_{y,k}$, $\tilde{\delta}_{u,k}$ be such that
	\begin{align*}
		\Delta \tilde{u}(0|k) &= \sum_{j=0}^{m-1}\Delta u^*(j|k), \\
		\Delta \tilde{u}(j|k) &= 0, \text{ for } 1\leq j <m, \\
		\tilde{y}_{sp,k} &= y_{sp,k}^*, \\
		\tilde{\delta}_{y,k} &= \delta_{y,k}^*, \\
		\tilde{\delta}_{u,k} &= \delta_{u,k}^*,
	\end{align*}
	and observe that this is a feasible strategy for the control optimization problem at time step $k$. Indeed, we have that
	\begin{align*}
		x_s^*(0|k-1) + D_0\sum_{j=0}^{m-1}\Delta \tilde{u}(j|k) - \tilde{y}_{sp,k} - \tilde{\delta}_{y,k} = x_s^*(0|k-1) + D_0\sum_{j=0}^{m-1}\Delta u^*(j|k) - y_{sp,k}^* - \delta_{y,k}^* = 0
	\end{align*}
	and
	\begin{align*}
		u^*(k-1) + \sum_{j=0}^{m-1}\Delta \tilde{u}(j|k) - u_{des} - \tilde{\delta}_{u,k} = u^*(k-1) + \sum_{j=0}^{m-1}\Delta u^*(j|k) - u_{des} - \delta_{u,k}^* = 0,
	\end{align*}
	so that (\ref{constx}) and (\ref{constu}) are satisfied, and we also have that
	\begin{align*}
		u^*(k-1) + \sum_{i=0}^{j} \Delta \tilde{u}(i|k) = u^*(k-1) + \sum_{i=0}^{m-1} \Delta u^*(i|k) \in \mathbb{U}
	\end{align*}
	for $0\leq j < m$. Moreover, note that $\tilde{y}_{sp,k} \in \mathbb{Y}$ and $\Delta \tilde{u}(j|k) \in \Delta\mathbb{U}$ for $0\leq j < m$.
	
	Now, the cost corresponding to the above strategy is
	\begin{align*}
		\tilde{V}_k = \sum_{j=0}^{m-1} \big\|\tilde{y}(j|k)-y_{sp,k}^*-\delta_{y,k}^*\big\|_{Q_y}^2 + \big\|\tilde{x}_d(m-1|k)\big\|_{\bar{Q}}^2 + \sum_{j=0}^{m-1} \big\|\tilde{u}(j|k)-u_{des}-\delta_{u,k}^*\big\|_{Q_u}^2 \\
		+ \Big\|\sum_{j=0}^{m-1} \Delta u^*(j|k)\Big\|_R^2 + \big\|\delta_{y,k}^*\big\|_{S_y}^2 + \big\|\delta_{u,k}^*\big\|_{S_u}^2
	\end{align*}
	and, for $0\leq j < m$, we have that $\tilde{u}(j|k)-u_{des}-\delta_{u,k}^* = 0$ and
	\begin{align*}
		\tilde{y}(j|k)-y_{sp,k}^*-\delta_{y,k}^* = \Psi F^j \Big(Fx_d^*(0|k-1) + D_d\sum_{i=0}^{m-1}\Delta u^*(i|k)\Big).
	\end{align*}
	Moreover, we also have that
	\begin{align*}
		\tilde{x}_d(m-1|k) = F^{m-1} \Big(Fx_d^*(0|k-1) + D_d\sum_{j=0}^{m-1}\Delta u^*(j|k)\Big)
	\end{align*}
	and then we obtain that
	\begin{align*}
		\tilde{V}_{k} &= \sum_{j=0}^{m-1} \Big\|\Psi F^j \Big(Fx_d^*(0|k-1) + D_d\sum_{i=0}^{m-1}\Delta u^*(i|k)\Big)\Big\|_{Q_y}^2 \\ 
		& ~~~~+ \Big\|F^{m-1} \Big(Fx_d^*(0|k-1) + D_d\sum_{j=0}^{m-1}\Delta u^*(j|k)\Big)\Big\|_{\bar{Q}}^2 + \Big\|\sum_{j=0}^{m-1}\Delta u^*(j|k)\Big\|_R^2 + \big\|\delta_{y,k}^*\big\|_{S_y}^2 + \big\|\delta_{u,k}^*\big\|_{S_u}^2 \\
		&= \Big\|Fx_d^*(0|k-1) + D_d\sum_{j=0}^{m-1}\Delta u^*(j|k)\Big\|_G^2 + \Big\|\sum_{j=0}^{m-1}\Delta u^*(j|k)\Big\|_R^2 + \big\|\delta_{y,k}^*\big\|_{S_y}^2 + \big\|\delta_{u,k}^*\big\|_{S_u}^2
	\end{align*}
	where 
	\begin{align*}
		G = \sum_{j=0}^{m-1} (\Psi F^j)^TQ_y(\Psi F^j) + (F^{m-1})^T\bar{Q}F^{m-1} = \sum_{j=0}^{\infty} (\Psi F^j)^TQ_y(\Psi F^j) = \Psi^TQ_y\Psi + \bar{Q}.
	\end{align*}
	Hence, by Lemma \ref{lemma5}, we deduce that
	\begin{align*}
		\tilde{V}_{k} - \big(\|\delta_{y,k}^*\|_{S_y}^2 + \|\delta_{u,k}^*\|_{S_u}^2\big) \xrightarrow{k\to\infty} 0.
	\end{align*}
	On the other hand, note that $V_k^* \leq \tilde{V}_k$ and $V_k^* - (\|\delta_{y,k}^*\|_{S_y}^2 + \|\delta_{u,k}^*\|_{S_u}^2) \geq 0$, therefore
	\begin{align*}
		V_k^* - \big(\|\delta_{y,k}^*\|_{S_y}^2 + \|\delta_{u,k}^*\|_{S_u}^2\big) \xrightarrow{k\to\infty} 0.
	\end{align*}
	Finally, since the sequence $(V_k^*)_{k\geq 1}$ converges, we conclude that
	\begin{equation*}
		\lim_{k\to\infty}V_k^* = \lim_{k\to\infty} \big(\|\delta_{y,k}^*\|_{S_y}^2 + \|\delta_{u,k}^*\|_{S_u}^2\big). 
	\end{equation*}
\end{proof}

As a byproduct of the last lemma, we have the following
\begin{cor}
	Under the same assumption of Lemma \ref{lemma6}, we have that
	\begin{align*}
		\sum_{j=0}^{m-1} \big\|\Delta u^*(j|k)\big\|_R^2 \to 0 ~\text{ as } k\to\infty. 
	\end{align*}
	\label{cor1}
\end{cor}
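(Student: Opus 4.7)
The plan is to read off the conclusion directly from Lemma \ref{lemma6} by observing that $V_k^*$ decomposes as a sum of non-negative contributions. Specifically, writing
\begin{align*}
V_k^* - \big(\|\delta_{y,k}^*\|_{S_y}^2 + \|\delta_{u,k}^*\|_{S_u}^2\big)
&= \sum_{j=0}^{m-1}\big\|y^*(j|k)-y^*_{sp,k}-\delta_{y,k}^*\big\|_{Q_y}^2 + \big\|x_d^*(m-1|k)\big\|_{\bar{Q}}^2 \\
&\quad + \sum_{j=0}^{m-1}\big\|u^*(j|k)-u_{des}-\delta_{u,k}^*\big\|_{Q_u}^2 + \sum_{j=0}^{m-1}\big\|\Delta u^*(j|k)\big\|_R^2,
\end{align*}
every term on the right-hand side is non-negative (the weighting matrices $Q_y$, $Q_u$, $R$ are positive definite and $\bar{Q}$ is positive semidefinite).

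By Lemma \ref{lemma6}, the left-hand side tends to $0$ as $k\to\infty$. Since each summand is non-negative, each must individually tend to $0$, and in particular
$$\sum_{j=0}^{m-1}\big\|\Delta u^*(j|k)\big\|_R^2 \xrightarrow{k\to\infty} 0,$$
which is the claim. There is no real obstacle here: the corollary is an immediate extraction from the identity established in the proof of Lemma \ref{lemma6}, and the only point to check is that the formula used there is exactly the expression of $V_k^*$ minus the slack contributions, so that dropping any subset of non-negative summands preserves the non-negativity needed for the squeeze argument.
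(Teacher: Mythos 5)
Your argument is correct and is exactly what the paper intends: the corollary is stated without proof as a ``byproduct'' of Lemma \ref{lemma6}, and the intended reasoning is precisely your squeeze argument, namely that $V_k^*-\big(\|\delta_{y,k}^*\|_{S_y}^2+\|\delta_{u,k}^*\|_{S_u}^2\big)\to 0$ while being a sum of non-negative terms, one of which is $\sum_{j=0}^{m-1}\|\Delta u^*(j|k)\|_R^2$.
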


\noindent
{\it Proof of Theorem} \ref{Maintheo3}\\
	Assume that $S_u>H+I_{n_u}$, where $I_{n_u}$ is the identity matrix of dimension $n_u$ and
	\begin{align*}
		H = (m-1)D_0^TQ_yD_0 + (\Psi D_d)^TQ_y(\Psi D_d) + D_d^T\bar{Q}D_d + (m-1)Q_u + R,
	\end{align*}
	and suppose that $\lim_{k\to\infty}V_k^*>0$.
	
	By Corollary \ref{cor1}, we can choose a large enough $k$ such that
	\begin{align}
		\Delta u^*(m-1|k) \in \text{int}\,\Delta\mathbb{U}.
		\label{largek}
	\end{align}
	For such a $k$, let $\alpha=\alpha(k)\in (0,1)$ be such that
	\begin{align*}
		\Delta u^*(m-1|k) - (1-\alpha) \delta_{u,k}^* \in \Delta\mathbb{U}
	\end{align*}
	and consider the following strategy for the control optimization problem at time step $k$:
	\begin{align*}
		\Delta \tilde{u}(j|k) &= \Delta u^*(j|k), \text{ for } 0\leq j <m-1, \\
		\Delta \tilde{u}(m-1|k) &= \Delta u^*(m-1|k) - (1-\alpha) \delta_{u,k}^*,  \\
		\tilde{y}_{sp,k} &= \alpha y_{sp,k}^* + (1-\alpha)D_0u_{des}, \\
		\tilde{\delta}_{y,k} &= \alpha\delta_{y,k}^*, \\
		\tilde{\delta}_{u,k} &= \alpha\delta_{u,k}^*.
	\end{align*}
	It is straightforward to check that this strategy satisfies (\ref{constu}) and, from the fact that the solution of the control optimization problem at time step $k$ satisfies (\ref{constx}) and (\ref{constu}), we deduce that 
	\begin{align*}
		D_0(u_{des}+\delta_{u,k}^*) = y_{sp,k}^* + \delta_{y,k}^*
	\end{align*}
	which can be used to check that the above strategy also satisfies (\ref{constx}). Moreover, we have that $\Delta \tilde{u}(j|k) \in \Delta\mathbb{U}$, for $0\leq j < m$, and
	\begin{align*}
		u^*(k-1) + \sum_{i=0}^{j} \Delta \tilde{u}(i|k) \in \mathbb{U}, \text{ for } 0\leq j < m-1.
	\end{align*}
	Using the fact that $\delta_{u,k}^* = u^*(m-1|k) - u_{des}$, which follows from (\ref{constu}), we also have that
	\begin{align*}
		u^*(k-1) + \sum_{i=0}^{m-1} \Delta \tilde{u}(i|k) = \alpha u^*(m-1|k) + (1-\alpha) u_{des} \in\mathbb{U},
	\end{align*}
	since both $u^*(m-1|k) \in\mathbb{U}$ and $u_{des} \in\mathbb{U}$. Also, note that $\tilde{y}_{sp,k}\in\mathbb{Y}$ since both $y_{sp,k}^*\in\mathbb{Y}$ and $D_0u_{des}\in\mathbb{Y}$. Thus, the above strategy is indeed feasible at time step $k$, and the corresponding cost is
	\begin{align*}
		\tilde{V}_k = \sum_{j=0}^{m-1} \big\|y^*(j|k)-y_{sp,k}^*-\delta_{y,k}^*\big\|_{Q_y}^2 + (m-1)(1-\alpha)^2 \big\|D_0\delta_{u,k}^*\big\|_{Q_y}^2 + (1-\alpha)^2 \big\|\Psi D_d \delta_{u,k}^*\big\|_{Q_y}^2 \\
		+ 2(1-\alpha)\Big(\sum_{j=0}^{m-2} \big\langle y^*(j|k)-y_{sp,k}^*-\delta_{y,k}^*, D_0\delta_{u,k}^*\big\rangle_{Q_y} - \big\langle \Psi x_d^*(m-1|k), \Psi D_d\delta_{u,k}^*\big\rangle_{Q_y} \Big) \\
		+ \big\|x_d^*(m-1|k)\big\|_{\bar{Q}}^2 + (1-\alpha)^2 \big\|D_d\delta_{u,k}^*\big\|_{\bar{Q}}^2 - 2(1-\alpha) \big\langle x_d^*(m-1|k), D_d\delta_{u,k}^*\big\rangle_{\bar{Q}} \\
		+ \sum_{j=0}^{m-1} \big\|u^*(j|k)-u_{des}-\delta_{u,k}^*\big\|_{Q_u}^2 + (m-1)(1-\alpha)^2 \big\|\delta_{u,k}^*\big\|_{Q_u}^2 \\
		+ 2(1-\alpha) \sum_{j=0}^{m-2} \big\langle u^*(j|k)-u_{des}-\delta_{u,k}^*, \delta_{u,k}^*\big\rangle_{Q_u} \\
		+ \sum_{j=0}^{m-1} \big\|\Delta u^*(j|k)\big\|_R^2 + (1-\alpha)^2 \big\|\delta_{u,k}^*\big\|_{R}^2 - 2(1-\alpha) \big\langle \Delta u^*(m-1|k), \delta_{u,k}^*\big\rangle_{R} \\
		+ \alpha^2 \big\|\delta_{y,k}^*\big\|_{S_y}^2 + \alpha^2 \big\|\delta_{u,k}^*\big\|_{S_u}^2.
	\end{align*}
	Hence, we have that
	\begin{align*}
		\frac{\tilde{V}_k - V_k^*}{1-\alpha} = 2\Big(\sum_{j=0}^{m-2} \big\langle y^*(j|k)-y_{sp,k}^*-\delta_{y,k}^*, D_0\delta_{u,k}^*\big\rangle_{Q_y} - \big\langle \Psi x_d^*(m-1|k), \Psi D_d\delta_{u,k}^*\big\rangle_{Q_y} \\
		- \big\langle x_d^*(m-1|k), D_d\delta_{u,k}^*\big\rangle_{\bar{Q}} + \sum_{j=0}^{m-2} \big\langle u^*(j|k)-u_{des}-\delta_{u,k}^*, \delta_{u,k}^*\big\rangle_{Q_u} \\
		- \big\langle \Delta u^*(m-1|k), \delta_{u,k}^*\big\rangle_{R} \Big) \\
		- (1+\alpha) \big\|\delta_{y,k}^*\big\|_{S_y}^2 + (1-\alpha) \big\|\delta_{u,k}^*\big\|_{H}^2 - (1+\alpha) \big\|\delta_{u,k}^*\big\|_{S_u}^2.
	\end{align*}
	
	Now, if $\limsup_{k\to\infty}\|\delta_{u,k}^*\|_{S_u}^2 >0$ then by equivalence of the norms, we have that\\ $\limsup_{k\to\infty}\|\delta_{u,k}^*\|^2 >0$ (where $\|\cdot\|$ is the euclidean norm) and therefore there exists $c>0$ such that $\|\delta_{u,k}^*\|^2 >c$ infinitely often. Since 
	\begin{align}
		x_d^*(m-1|k) = F^{m}x_d^*(0|k-1) + \sum_{i=0}^{m-1} F^{m-1-i} D_d\Delta u^*(i|k)
		\label{eq1}
	\end{align}
	and, for $0\leq j\leq m-2$,
	\begin{align}
		y^*(j|k)-y_{sp,k}^*-\delta_{y,k}^* = \Psi \big[F^{j+1}x_d^*(0|k-1) + \sum_{i=0}^{j} F^{j-i} D_d\Delta u^*(i|k)\big] -D_0\sum_{i=j+1}^{m-1} \Delta u^*(i|k)
		\label{eq2}
	\end{align}
	and also
	\begin{align}
		u^*(j|k)-u_{des}-\delta_{u,k}^* = - \sum_{i=j+1}^{m-1} \Delta u^*(i|k),
		\label{eq3}
	\end{align}
	by Lemma \ref{lemma5} and Corollary \ref{cor1}, for $0<\varepsilon<c$, there exists $k_0$ (large enough) satisfying (\ref{largek}) such that
	\begin{align*}
		\frac{\tilde{V}_{k_0} - V_{k_0}^*}{1-\alpha(k_0)}  < \varepsilon + \|\delta^*_{u,k_0}\|_{H}^2 -  \|\delta_{u,k_0}^*\|_{S_u}^2 < \varepsilon - \|\delta_{u,k_0}^*\|^2 < \varepsilon - c,
	\end{align*}
	since $S_u>H+I_{n_u}$. Finally, as $\varepsilon-c<0$, we conclude that $\tilde{V}_{k_0}<V_{k_0}^*$, which is a contradiction.
	
	On the other hand, if $\limsup_{k\to\infty}\|\delta_{u,k}^*\|_{S_u}^2 =0$ then, by Lemma \ref{lemma6}, we have that
	\begin{align*}
		\lim_{k\to\infty}\|\delta_{y,k}^*\|_{S_y}^2 = \lim_{k\to\infty} V_k^* >0,
	\end{align*}
	and using (\ref{eq1}), (\ref{eq2}), (\ref{eq3}), Lemma \ref{lemma5} and Corollary \ref{cor1} we deduce that, for $0<\varepsilon'<\lim_{k\to\infty} V_k^*$, there exists $k_1$ (large enough) satisfying (\ref{largek}) such that
	\begin{align*}
		\frac{\tilde{V}_{k_1} - V_{k_1}^*}{1-\alpha(k_1)}  < \frac{\varepsilon'}{2} - \|\delta_{y,k_1}^*\|_{S_y}^2 < \frac{\varepsilon'}{2} - \Big(\lim_{k\to\infty} V_k^* - \frac{\varepsilon'}{2}\Big) = \varepsilon' - \lim_{k\to\infty} V_k^* < 0
	\end{align*}
	and then $\tilde{V}_{k_1}<V_{k_1}^*$, which is a contradiction once again.
\qed

\medskip\medskip

\noindent
{\it Proof of Theorem} \ref{Maintheo4}\\
	Consider the following strategy for the control optimization problem at time step $k=1$,
	\begin{align*}
		\Delta \tilde{u}(j|1) &= 0, \text{ for } 0\leq j <m, \\
		\tilde{y}_{sp,1} &= D_0u_{des}, \\
		\tilde{\delta}_{y,1} &= -D_0u_{des}, \\
		\tilde{\delta}_{u,1} &= -u_{des}.
	\end{align*}
	Since the system is assumed to be initially in the steady state $u(0)={\bf 0}$, $x_s(0)={\bf 0}$, $x_d(0)={\bf 0}$, the above strategy is indeed feasible and it has an associated cost given by 
	$$ \tilde{V}_1=\|D_0u_{des}\|_{S_y}^2 + \|u_{des}\|_{S_u}^2. $$
	Moreover, since this strategy is not necessarily the optimal one, and using Lemma \ref{lemma4}, we have that $V_k^* \leq V_1^* \leq \tilde{V}_1$ for all $k\geq 1$. 
	
	Now, for any $k\geq 1$, we have that
	\begin{align*}
		\|y^*(0|k)-y^*_{sp,k}-\delta^*_{y,k}\|_{Q_y}^2 + \|\delta^*_{y,k}\|_{S_y}^2 + \|u^*(0|k)-u_{des}-\delta^*_{u,k}\|_{Q_u}^2 + \|\delta^*_{u,k}\|_{S_u}^2 \leq V_k^*.
	\end{align*}
	On the other hand,
	\begin{align*}
		\|y^*(0|k)-y^*_{sp,k}\|_{Q_y}^2 &\leq 2\big(\|y^*(0|k)-y^*_{sp,k}-\delta^*_{y,k}\|_{Q_y}^2 + \|\delta^*_{y,k}\|_{Q_y}^2\big) \\
		&\leq C_1\big(\|y^*(0|k)-y^*_{sp,k}-\delta^*_{y,k}\|_{Q_y}^2 + \|\delta^*_{y,k}\|_{S_y}^2\big) 
	\end{align*}
	for some positive constant $C_1$ and
	\begin{align*}
		\|u^*(0|k)-u_{des}\|_{Q_u}^2 &\leq 2\big(\|u^*(0|k)-u_{des}-\delta^*_{u,k}\|_{Q_u}^2 + \|\delta^*_{u,k}\|_{Q_u}^2\big) \\
		&\leq C_2\big(\|u^*(0|k)-u_{des}-\delta^*_{u,k}\|_{Q_u}^2 + \|\delta^*_{u,k}\|_{S_u}^2\big) 
	\end{align*}
	for some positive constant $C_2$. Thus, we obtain
	\begin{align*}
		\bigg\|\begin{bmatrix} y^*(0|k) - y^*_{sp,k} \\ u^*(0|k) - u_{des} \end{bmatrix}\bigg\| = \Big(\|y^*(0|k)-y^*_{sp,k}\|^2 + \|u^*(0|k)-u_{des}\|^2\Big)^{1/2} \leq C_3 \|u_{des}\|
	\end{align*}
	for some positive constant $C_3$, for all $k\geq 1$, which finally implies the result.
\qed

\section{Appendix}
In this appendix, we give an explicit expression for the constant $C_3$ that appears in the proof of Proposition \ref{PropApprox}.  For a positive semidefinite matrix $M$, let us define 
$$ \Gamma_M^2 := \sup_{\|x\|=1} x^T M x, $$
where $\|\cdot\|$ is the euclidean norm. It is well known that $\Gamma_M^2$ is the spectral radius of $M$.

Now, observe that
\begin{align*}
\sum_{j=0}^{m-1}\|\Psi F^{j+1}&x_d^*(0|k-1)\|_Q  \|\Psi  F^jD_d\Delta\tilde{u}(0|k)\|_Q +\| F^mx_d^*(0|k-1)\|_{\bar{Q}}\| F^{m-1}D_d\Delta\tilde{u}(0|k) \|_{\bar{Q}} \\
&\leq \Big(\sum_{j=0}^{m-1}\|\Psi F^{j+1}x_d^*(0|k-1)\|_Q +\| F^mx_d^*(0|k-1)\|_{\bar{Q}}\Big)\\ &\;\;\;\;\;\;\;\;\;\;\;\;\;\;\;\;\;\;\times\Big(\sum_{j=0}^{m-1}\|\Psi  F^jD_d\Delta\tilde{u}(0|k)\|_Q +\| F^{m-1}D_d\Delta\tilde{u}(0|k) \|_{\bar{Q}}\Big) \\
&= \|x_d^*(0|k-1)\|_{\bar{Q}} \|\Delta\tilde{u}(0|k)\|_{Z-R} \\
&\leq \Gamma_{\bar{Q}}\Gamma_{Z-R}\|x_d^*(0|k-1)\|\|\Delta\tilde{u}(0|k)\|,
\end{align*}
therefore we obtain that 
$$ C_3 = 2\varphi^{-2} \big[\Gamma_Z^2\vee (2\Gamma_{\bar{Q}}\Gamma_{Z-R})\big], $$
where $\varphi$ and $Z$ are defined in the proof of Proposition \ref{PropApprox}.

Finally, it is worth noting that in the case $D_0$  regular, the expression of $\hat{S}$ boils down to
$\hat{S}=(D_0^{-1})^TD_0^{-1}$ and $C_3=2 \big[\Gamma_Z^2\vee (2\Gamma_{\bar{Q}}\Gamma_{Z-R})\big]$ since in this case $\varphi=1$.

\section*{Acknowledgements}
L.A.~Alvarez thanks FAEPEX (2556/23) for finantial support. C.~Gallesco thanks The S\~ao Paulo Research Foundation, FAPESP (2023/07228-9) and FAEPEX (3073/23) for finantial support.


\begin{thebibliography}{30}
	
\bibitem{AlFrancSantOd2009} L.A.~Alvarez, E.M.~Francischinelli, B.F.~Santoro, D.~Odloak, Stable Model Predictive Control for Integrating Systems with Optimizing Targets, Industrial $\&$ Engineering Chemistry Research 48 (2009) 9141-9150.	

\bibitem{AlOd2010} L.A.~Alvarez, D.~Odloak, Robust integration of real time optimization with linear model predictive control, Computers and Chemical Engineering 34 (2010) 1937-1944.

\bibitem{AlOd2014} L.A.~Alvarez, D.~Odloak, Reduction of the QP-MPC cascade structure to a single layer MPC, Journal of Process Control 24 (2014) 1627-1638.

\bibitem{CamBord2019} E.F.~Camacho, C.~Bordons, Model Predictive Control, 2nd Ed., Springer London (2007).


\bibitem{CarraOd2005} O.L.~Carrapiço, D.~Odloak, A stable model predictive control for integrating processes, Computers and Chemical Engineering 29 (2005) 1089-1099.

\bibitem{CarrSantZanOd2009} O.L.~Carrapiço, M.M.~Santos, A.C.~Zanin, D.~Odloak, Application of the IHMPC to an industrial process system, IFAC Proceedings Volumes 42 (2009) 851-856.

\bibitem{CarAl2020} R.F.~de Carvalho, L.A.~Alvarez, Simultaneous Process Design and Control of the Williams-Otto Reactor Using Infinite Horizon Model Predictive Control, Industrial $\&$ Engineering Chemistry Research 59 (2020) 15979-15989.

\bibitem{CosSchnitOdMart2022} E.A.~Costa, L.~Schnitman, D.~Odloak, M.A.F.~Martins, A One-Layer Stabilizing Model Predictive Control Strategy of Integrating Systems with Repeated Poles, Journal of Control, Automation and Electrical Systems 33 (2022) 369-381.

\bibitem{GonzMarOd2007} A.H.~González, J.L.~Marchetti, D.~Odloak, Extended robust model predictive control of integrating systems, AIChE Journal 53 (2007) 1758-1769.

\bibitem{GonzOd2009} A.H.~González, D.~Odloak, A stable MPC with zone control, Journal of Process Control 19 (2009) 110-122.

\bibitem{GonzOd2011} A.H.~González, D.~Odloak, Robust model predictive control for time delayed systems with optimizing targets and zone control. In: A. Bartoszewicz, Robust Control, Theory and Applications, Viena: IntechOpen (2011) 339-370.

\bibitem{KeerGilb1988} S.S.~Keerthi, E.G.~Gilbert, Optimal infinite-horizon feedback laws for a general class of constrained discrete-time systems: Stability and moving-horizon approximations, Journal of Optimization Theory and Applications 57 (1988) 265-293.

\bibitem{MarqAl2023} F.H.~Marques, L.A.~Alvarez, Advanced process control system with MPC as a new approach for layer of protection analysis, Journal of Loss Prevention in the Process Industries 83 (2023) 104993.

\bibitem{MartOd} M.A.F.~Martins, D.~Odloak, A robustly stabilizing model predictive control strategy of stable and unstable processes, Automatica 67 (2016) 132-143.	

\bibitem{MartOdKas2013} P.A.~Martin, D.~Odloak, F.~Kassab, Robust model predictive control of a pilot plant distillation column, Control Engineering Practice 21 (2013) 231-241.

\bibitem{MartYamaSantOd2013} M.A.F.~Martins, A.S.~Yamashita, B.F.~Santoro, D.~Odloak, Robust model predictive control of integrating time delay processes, Journal of Process Control 23 (2013) 917-932.

\bibitem{MartZanOd2019} P.A.~Martin, A.C.~Zanin, D.~Odloak, Integrating real time optimization and model predictive control of a crude distillation unit, Brazilian Journal of Chemical Engineering 36 (2019) 1205-1222.

\bibitem{MichMay1993} H.~Michalska, D.Q.~Mayne, Robust receding horizon control of constrained nonlinear systems, IEEE Transactions on Automatic Control 38 (1993) 1623-1633.
	
\bibitem{Od2004} D.~Odloak, Extended robust model predictive control, AIChE Journal 50 (8) (2004) 1824--1836. 

\bibitem{OliCarAl2019} R.C.~de Oliveira, R.F.~de Carvalho, L.A.~Alvarez, Multi-Model Adaptive Integration of Real Time Optimization and Model Predictive Control, IFAC PapersOnLine 52 (2019) 661-666.

\bibitem{PatAmJos2019} I.M.L.~Pataro, M.V.A.~da Costa, B.~Joseph. Advanced simulation and analysis of MIMO dead time compensator and predictive controller for ethanol distillation process. 12th IFAC Symposium on Dynamics and Control of Process Systems, including Biosystems (2019) 160-165.

\bibitem{PatGilCostGuzBeren2022} I. M.~Pataro, J. D.~Gil, M.V.A.~da Costa, J. L.~Guzman,  M.~Berenguel. A stabilizing predictive controller with implicit feedforward compensation for stable and time-delayed systems. Journal of Process Control, (2022), 115, 12-26.

\bibitem{PorfOd2011} R.~Porfírio, D.~Odloak, Optimizing model predictive control of an industrial distillation column, Control Engineering Practice 19 (2011) 1137-1146.

\bibitem{RawMusk1993} J.B.~Rawlings, K.R.~Muske, The stability of constrained receding horizon control, IEEE Transactions on Automatic Control 38 (1993) 1512-1516.

\bibitem{RodOd2003} M.A.~Rodrigues, D.~Odloak, MPC for stable linear systems with model uncertainty. Automatica 39 (2003) 569-583.

\bibitem{SantOd2012} B.F.~Santoro, D.~Odloak, Closed-loop stable model predictive control of integrating systems with dead time, Journal of Process Control 22 (2012) 1209-1218.

\bibitem{Sen2022} R.R.~Sencio. Model predictive control based on the output prediction-oriented model: a dual-mode approach, and robust distributed algorithms. Tese de Doutorado, Escola Politécnica, Universidade de São Paulo, São Paulo (2022). doi:10.11606/T.3.2022.tde-02012023-091805.

\bibitem{SilSantSantMart2020} P.M.~Silva, B.A.~Santana, T.L.M.~Santos, M.A.F.~Martins, An implementable stabilizing model predictive controller applied to a rotary flexible link: An experimental case study, Control Engineering Practice 99 (2020) 104396.

\bibitem{Strut2014} F.A.M.~Strutzel. Controle IHMPC de um processo industrial de hidrotratamento de diesel. Dissertação de Mestrado, Escola Politécnica, Universidade de São Paulo, São Paulo (2014). doi:10.11606/D.3.2014.tde-24062014-102335.

\bibitem{StruOdZan2013} F.A.M.~Strutzel, D.~Odloak, A.C.~Zanin, Economic MPC of an Industrial Diesel Hydrotreating Plant, Proceedings of the IASTED International Conference, Intelligent Systems and Control (2013) 354-360. DOI: 10.2316/P.2013.807-022.










\end{thebibliography}
\end{document}